%% file: main.tex
\documentclass[preprint,12pt]{elsarticle}

\usepackage{amsmath}
\usepackage{amssymb}
\usepackage{amsthm}
\usepackage{graphicx}
\usepackage{tcolorbox}
\usepackage{cancel}
\usepackage{ulem} 
\usepackage{multirow}
\usepackage{tabularx}
\usepackage{booktabs}
\usepackage[margin=1in]{geometry}
\usepackage{pifont} 
\usepackage{array,multirow}
\usepackage{makecell}
\usepackage{graphicx}
\usepackage{subcaption}
\usepackage{bm}
\usepackage{diagbox}
\usepackage{slashbox}
\usepackage{enumitem}

\newcolumntype{P}[1]{>{\centering\arraybackslash}p{#1}}

\DeclareMathOperator*{\dist}{dist}

\theoremstyle{plain}
\newtheorem{theorem}{Theorem}
\newtheorem{lemma}{Lemma}

\theoremstyle{definition}
\newtheorem{definition}{Definition}
\newtheorem{assumption}{Assumption}

\theoremstyle{remark}
\newtheorem{remark}{Remark}

\begin{document}
\begin{frontmatter}

\title{\textbf{Convergence Analysis via ODE Approach \\for Convex Optimization with Linear Equality Constraints}}

\author[aff1,aff3,aff4]{Chise Ishii\corref{cor1}}
\ead{chiseblanton125@keio.jp}

\author[aff2,aff3]{Yasushi Narushima}
\ead{narushima@keio.jp}

\cortext[cor1]{Corresponding author}

\affiliation[aff1]{organization={Graduate School of Science and Technology, Keio University},
            addressline={3-14-1 Hiyoshi, Kohoku-ku}, 
            city={Yokohama},
            postcode={223-8522}, 
            state={Kanagawa},
            country={Japan}}

\affiliation[aff2]{organization={Department of Industrial and Systems Engineering, Keio University},
            addressline={3-14-1 Hiyoshi, Kohoku-ku}, 
            city={Yokohama},
            postcode={223-8522}, 
            state={Kanagawa},
            country={Japan}}

\affiliation[aff3]{organization={Keio AI Research Center},
            addressline={4-1-1 Hiyoshi, Kohoku-ku}, 
            city={Yokohama},
            postcode={223-8521}, 
            state={Kanagawa},
            country={Japan}}

\affiliation[aff4]{organization={Mitsubishi UFJ Financial Group, Inc. and MUFG Bank,
Ltd.},
            addressline={1-4-5 Marunouchi, Chiyoda-ku}, 
            city={Tokyo},
            postcode={100-8388}, 
            country={Japan}}
 
\begin{abstract}
  This paper studies the continuous-time dynamics of primal-dual algorithms for linearly constrained convex optimization problems and provides a quantitative convergence analysis using the Lyapunov functions.
  With the growing prevalence of sparse and low-rank models, large-scale problems involving nonsmooth objective functions have become increasingly important.
  Our approach addresses nonsmooth and nonstrong convex objective functions, which is particularly effective in extending classical accelerated methods to broader large-scale optimization problems.
  Building upon the ordinary differential equation (ODE) approach inspired by the recent work on Nesterov's acceleration methods, we extend the analysis to an ODE associated with an optimization problem with linear equality constraints.
  Moreover, by imposing a geometric condition analogous to the \textit{Kurdyka--\text{\L}ojasiewicz (K\text{\L}) property} on the objective function, we derive convergence rates that depend explicitly on the local geometry and establish the $\mathcal{O}(1/t^2)$ local convergence rate.
  For the algorithmic construction, a numerical scheme is derived by discretizing the proposed ODE. Furthermore, we investigate the influence of algorithm parameters and provide insights into their optimal selection. 
  Finally, preliminary numerical experiments are provided to validate the consistency with the theoretical results.
\end{abstract}

\begin{keyword} Convex optimization, Dynamical system, Linear equality constraints, Primal-dual method, Lyapunov functions, Convergence analysis, Kurdyka--\text{\L}ojasiewicz property.
\end{keyword}

\end{frontmatter}

\input{chapter1_0521}
\input{chapter2_0521}
\input{chapter5_0521}

\input{chapter7_0515}

\input{chapter6_0521}

\input{ref_0515}
\end{document}

%% file: chapter1_0521.tex
\section{Introduction}
Consider the separable convex optimization problem with linear equality constraints:
\begin{align}\label{prob:main}
  \min_{\bm{x}} \left\{F(\bm{x}):=\sum_{i=1}^{m} f^i(x^i) \mid s.t.\ \sum_{i=1}^{m}A^ix^i= \bm{b},\ x^i\in \mathbb{R}^{n_i},\ i= 1,\ldots,m \right\}.
\end{align}
We define the matrix $A$ and the vector $\bm{x}$ as
\begin{align*}
N = \sum_{i=1}^m n_i,\quad
\bm{A} := \bigl[ A^{1},\dots, A^{m} \bigr]
\in \mathbb{R}^{r\times N},
\quad
\bm{x}:=
\begin{bmatrix}
x^{1} \\
\vdots \\
x^{m}
\end{bmatrix}\in \mathbb{R}^{N},
\end{align*}

where $m\in \mathbb{N}$ is an index that denotes the number of blocks, 
$\bm{A} \in \mathbb{R}^{r \times n_i}\ (rank(\bm{A}) = r)$ are linear operators and  $\bm{b} \in \mathbb{R}^r$ is a given vector. $f : \mathbb{R}^{n_i} \to \mathbb{R} \cup \{+\infty\}$ is a proper, lower semi-continuous, closed convex function, but not necessarily differentiable. We denote by $\operatorname{dom} F$ the domain of the function and $\overline{\operatorname{dom} F}$  the closure of $\operatorname{dom} F$. Let $\operatorname{dom} F\cap \{\bm{x}|\bm{A}\bm{x} = \bm{b}\}\neq \emptyset$.
 
We are mainly interested in first-order primal-dual methods for \eqref{prob:main}
based on the Lagrange function:
\begin{equation}\label{eq:lagrangian}
    \mathcal{L}(\bm{x},\bm{\lambda})
    := F(\bm{x})
       + \langle \bm{\lambda},\ \bm{A}\bm{x}- \bm{b}\rangle,
\end{equation}
where $(\bm{x},\bm{\lambda}) \in \mathbb{R}^{N} \times \mathbb{R}^r$ and $\mathcal{L}$ is proper, lower semi-continuous, and convex.
Throughout this paper, we assume that at least one saddle point $(\bm{x}^{\ast}, \bm{\lambda}^\ast) \in 
\mathbb{R}^{N} \times \mathbb{R}^r$
exists and the set $\mathcal{\bm{X}}^*:=\arg\min F(\bm{x})\neq \emptyset \  and\  F^* = F(\bm{x}^*) :=\inf F(\bm{x})$. Then, we have
\begin{equation}\label{eq:saddle}
    \mathcal{L}(\bm{x}^{\ast}, \bm{\lambda})
    \ \le\ 
    \mathcal{L}(\bm{x}^{\ast}, \bm{\lambda}^\ast)
    \ \le\
    \mathcal{L}(\bm{x}, \bm{\lambda}^\ast),
    \qquad
    \forall (\bm{x},\bm{\lambda}) \in
    \mathbb{R}^{N}  \times \mathbb{R}^r.
\end{equation}
Note that linear inequality constraints can always be rewritten in the form 
$\bm{A}\bm{x}= \bm{b}$ by introducing nonnegative slack variables.  For example,
$\bm{A}\bm{x} \le \bm{b}$ is equivalent to $\bm{A}\bm{x} + \bm{s} = \bm{b}$ with $\bm{s} \ge 0$.\\
\indent Inspired by the pioneering work of Su et al.~\cite{su2014}, we study the following second-order ordinary differential inclusion (ODI):
\begin{align}\label{eq:ode}
  \begin{cases}
  0 \in 2 \ddot{\bm{x}}(t) + \frac{\alpha}{t}\dot{\bm{x}}(t)+ \partial_{\bm{x}} \mathcal{L}(\bm{x}(t),\bm{\lambda}(t))\\
  0 = \frac{\rho}{t}\dot{\bm{\lambda}}(t) - \nabla_{\bm{\lambda}}\mathcal{L}(\bm{x}(t) + \dot{\bm{x}}(t), \bm{\lambda}(t))
  \end{cases}
\end{align}
with $t_0>0$, an initial condition $\bm{x}(t_0) = \bm{x}_{0}$ and $\dot{\bm{x}}(t_0) = \bm{v}_0\in \operatorname{dom} F$.
Where $\nabla$ is a notion of the usual gradient and $\partial$ is a notion of generalized gradients so-called subdifferential and $\alpha, \rho$ are positive accelerate parameters.
Because $\mathcal{L}$ is proper, lower semi-continuous, and convex function, the subdifferential $\partial \mathcal{L}$ is a maximal monotone operator~\cite{rockafeller}, and therefore the Cauchy problem of \eqref{eq:ode} admits a unique global strong solution in the sense of Br\'ezis~\cite{brezis1973}\cite{brezis2011}.
 In particular, for every initial point $x_{0} \in \overline{\operatorname{dom} F}$, there exists a unique absolutely continuous trajectory $x(\cdot)$ satisfying \eqref{eq:ode} for a.e.\ $t>0$ and $\bm{x}(0) = \bm{x}_{0}$.

In this work, we investigate the decay of $F(\bm{x}(t)) - F^*$ under geometric assumptions on $F$ around its set of minimizers that are weaker than strong convexity.
Indeed, Attouch-et al.~\cite{attouch2013} showed the link between dynamical
stability and objective minimization using the combination of descent 
dynamics and the \textit{Kurdyka- \text{\L}ojasiewicz (K\text{\L}) property}~\cite{kuadyka}.

\subsection{Applications}
Although the model~\eqref{prob:main} is structurally simple, it is sufficiently expressive to cover a wide range of modern applications such as machine learning, signal processing, statistics, computer vision, among others.
support vector machines~\cite{74}, constrained sparse regression~\cite{18}, signal recovery~\cite{42, 40, 41, 70}, image restoration and denoising~\cite{72, 154, 134}.
For example, sparse signal recovery problems such as the lasso formulations can be written by introducing an auxiliary variable $x^1=x^2$, which immediately yields a two-block (namely, $m=2$) instance of \eqref{prob:main}.
Similarly, generalized LASSO models enforce sparsity on a transformed variable $Gx^1$ and become special cases of \eqref{prob:main} after introducing the constraint $Gx^1=x^2$.
In matrix completion, the rank-minimization model becomes convex when the rank is replaced by the nuclear norm, and a simple variable split again puts the problem into the same framework.
Similarly, robust principal component analysis (RPCA), which decomposes data into low-rank and sparse components, reduces—after replacing the rank by their convex surrogates and adding the constraint to a three-block instance of \eqref{prob:main}.
These representative examples highlight that many high-dimensional optimization models naturally possess separable objective functions and linear equality constraints, making \eqref{prob:main} a convenient and unified abstraction for large-scale convex optimization~\cite{de}.

\subsection{Related works}
\subsubsection{ADMM and its variants}
Although \textit{the alternate direction method of multipliers} (ADMM)~\cite{Boyd2011} originally proposed in the mid-1970s by Glowinski and Marrocco~\cite{glowinsky} and Gabay and Mercier~\cite{gabey} has been successfully applied in a wide range of modern applications, 
its theoretical guarantiees have historically required restrictive assumptions.
In particular, classical convergence analysis cannot obtain the accelerated rate 
$\mathcal{O}(1/k^{2})$ unless the objective contains a strongly convex component.
Without the strong convexity assumption, the standard ADMM and its linearized variants generally 
achieve $\mathcal{O}(1/k)$, and no analysis yields $\mathcal{O}(1/k^{2})$ decay.

For example, Ouyang et al.~\cite{Ouyang} introduced a multi-stage acceleration strategy 
into Accelerated-linearized-ADMM (AL-ADMM), under the condition of convex assumption, obtaining an ergodic rate of 
$\mathcal{O}(1/k)$. Li and Lin~\cite{Li} also obtain a nonergodic rate of $\mathcal{O}(1/k)$ proposing nonergordic AL-ADMM (AL-ADMM-NE), 
a modification of Ouyang et al.'s method. 
More recently, Sabach and Teboulle~\cite{Sabach2022} established an $\mathcal{O}(1/k^2)$ rate for Accelerated-ADMM scheme under strong convexity by introducing a unified FLAG (Faster LAGrangian-based) framework.

Overall, these developments clearly indicate that attaining $\mathcal{O}(1/k^{2})$ convergence rate for ADMM has historically been possible 
only under the strong convexity assumptions, and the calmness of the strong convexity has 
remained a fundamental barrier in prior analyses.

\subsubsection{Dynamical system approach}
In recent years, continuous-time dynamical systems that arise as limits of optimization algorithms have attracted considerable attention in an unconstrained setting. 
This dynamical systems approach to convergence analysis, pioneered by Su et al.~\cite{su2014}, leverages the analogy between continuous optimization and numerical analysis. 
They derived a second-order ordinary differential equation as the continuous-time limit of Nesterov's accelerated gradient method (NAG) and demonstrated equivalence between the discrete scheme and NAG, 
establishing that continuous models can effectively analyze optimization algorithms.

This framework offers several advantages over traditional optimization-theoretic analyses. 
First, the continuous limit allows chain rules to be employed, simplifying analysis and potentially enabling results without strong assumptions (e.g., the L-smoothness). 
Second, the dynamics formulation enables subdifferential systems and extensions to discontinuous and nonconvex settings. 
Third, ideas from numerical discretization can directly relate to dynamical properties, clarifying how parameters such as step sizes affect convergence rates. 
Finally, careful design of the ordinary differential equations (ODE) suggests that appropriate discretization schemes may yield high-quality algorithms.

Despite these advantages, a unified framework for this analysis technique remains underdeveloped. 
Recent work by Ushiyama et al.~\cite{Ushiyama} has established a correspondence between discrete algorithms and their continuous ODE models, 
providing a framework to bridge this gap by introducing a new
concept called “\textit{weak discrete gradient (wDG)}", which consolidates the conditions
required for discrete versions of gradients in the differential equation (DE) approach arguments in the unconstrained setting.

Regarding relaxation of strong convexity assumptions, the K\text{\L} property enables convergence analysis 
for nonconvex and nonsmooth functions. 
For objective functions (not necessarily the strongly convex) satisfying the K\text{\L} property, Aujol et al.~\cite{aujol2019} analyzed the second-order ODE arising as the continuous limit of FISTA using the Lyapunov function, 
deriving convergence rates depending on the local geometry and establishing the classical $\mathcal{O}(1/t^2)$ local rate.
To handle nonsmoothness, Luo~\cite{moreau} analyzed the second-order
ordinary differential inclusion (ODI) employing  the Moreau–Yosida regularization, which allows to establish both the well-posedness of the continuous dynamics and a rigorous discretization into proximal-type algorithms. 

While the dynamical systems approach has been extensively studied for unconstrained optimization problems,  
Luo and Zhang~\cite{Luo2023} applied it to constrained optimization problems proposing the accelerated primal-dual (APD) flow. 
Under the smoothness and the convexity assumptions
(requiring the strong convexity assumption of either $f$ or $g$), they introduced a Lyapunov function for the continuous model. 
After establishing linear convergence at the continuous level, they performed time discretization and analyzed the resulting scheme via a discrete Lyapunov function, 
obtaining the proximal ADMM (P-ADMM) algorithm with nonergodic mixed-rate convergence guarantees. 
They identified extension to nonsmooth settings at the continuous level as future work, which remains an important direction. These results are summarized in Table~\ref{tab:1}.

\newcolumntype{C}[1]{>{\centering\arraybackslash}p{#1}}

\begin{table}[htbp]
  \centering
  \caption{Relationship between the Objective Function and the Associated Dynamics}
  \label{tab:1}
  \scriptsize
  \setlength{\tabcolsep}{1.5pt} 
  \renewcommand{\arraystretch}{1.3} 

  \begin{tabular}{l C{1.8cm} c C{1.2cm} C{1.8cm} ccc ccc C{1.2cm} C{1.2cm}}
    \toprule
    & & & & & \multicolumn{3}{c}{Convexity} & \multicolumn{3}{c}{Continuity} & & \\
    \cmidrule(lr){6-8} \cmidrule(lr){9-11}
    Type & Dynamics & Order & Rate & Study & \makecell{Strong\\conv.} & Conv. & \makecell{Non-\\conv.} & \makecell{Diff.} & \makecell{Loc.\\Lips.} & Lsc & \makecell{Geom.\\assump.} & \makecell{Moreau\\approx.} \\
    \midrule

    \multirow{3}{*}{\makecell[l]{Non-\\constraint}}
      & NAG flow & 2nd ODE & $O(1/t^{2})$ & Su et al. \cite{su2014}
      & -- & $f$ & -- & $\nabla f$ & -- & -- & -- & -- \\
      & NAG flow & 2nd ODI & $O(1/t^{2})$ & Luo \cite{moreau}
      & -- & -- & $f$ & -- & -- & $f$ & -- & $f$ \\
      & Polyak's ODE & 2nd ODE & $O(1/t^{2})$ & Aujol et al. \cite{aujol2019}
      & -- & $f$ & -- & $f$ & -- & -- & $f$ & -- \\

    \midrule 

    Constraint
      & APD flow & 2nd ODI & $O(1/t^{2})$ & Luo and Zhang \cite{Luo2023}
      & $f$ (or $g$) & $g$ (or $f$) & -- & $\nabla f$ & -- & -- & -- & -- \\
    \bottomrule
  \end{tabular}
\end{table}

\subsection{Motivations and contributions}
In many large-scale and nonsmooth optimization problems, the strong convexity assumption often does not hold.
Nevertheless, existing studies on convex optimization problems with linear equality constraints have generally required the assumption of strong convexity.
Our goal is to develop algorithms that relax the strong convexity requirement and extend classical accelerated methods to broader problem classes.
To address this, we propose a novel dynamical system derived as a continuous limit of the primal-dual algorithms for convex optimization problems with linear equality constraints and perform convergence analysis without the strong convexity assumption.

In this study, we conduct a quantitative Lyapunov-based analysis for objective functions that are nonsmooth and nonstrong convexity, employing the \textit{Kurdyka-\text{\L}ojasiewicz (K\text{\L}) property} as a substitute for the strong convexity assumption. Under suitable assumptions, we establish local convergence rates of $\mathcal{O}(1/t^{2})$ . Furthermore, we derive discrete algorithms via appropriate numerical discretization schemes and verify the consistency with the theoretical results through a series of numerical experiments.

\subsection{Organization of the paper}
The paper is organized as follows.
Section~2 introduces the geometric assumptions imposed on $F$ and discusses their relationship with
the \textit{K\text{\L} property}.
Section~3 presents our main results, which consist of a convergence analysis for the continuous-time
model. Section~4 is devoted to discretization of the dynamics and to numerical experiments
that illustrate the theoretical findings.

%% file: chapter2_0521.tex
\section{Local geometry of convex functions}
In this section, we introduce the \textit{Kurdyka--\text{\L}ojasiewicz (K\text{\L}) property}~\cite{kuadyka}, extended to nonsmooth functions by the work of Bolte et al.~\cite{bolte2007}, 
which plays an important role in nonsmooth, nonconvex analysis and modern convergence theory. 
We also present its extension to the associated Lagrange function, which is instrumental in the proof of our main result. 

\subsection{Kurdyka–\text{\L}ojasiewicz property}
The \textit{Kurdyka–\text{\L}ojasiewicz (K\text{\L}) property} characterizes the local geometry of functions around their critical points, providing a framework for analyzing convergence behavior in nonsmooth, nonconvex optimization problems.
For any $\bm{x} \in \mathbb{R}^N$, we define the distance to $\Omega$ by
$\mathrm{dist}(\bm{x},\Omega) := \inf_{\bm{\zeta}\in \Omega}\|\bm{x}-\bm{\zeta}\|,$
where $\|\cdot\|$ denotes the Euclidean norm. The following definition is taken from~\cite{attouch2013}\cite{bolte2016}.
\begin{definition}[Kurdyka–\text{\L}ojasiewicz (K\text{\L}) inequality]\label{def:KLproperty}
Let $F :\mathbb{R}^{N} \to \mathbb{R} \cup \{+\infty\}$ be a proper, lower semicontinuous function. Let $r_0 > 0$ and set
\begin{align*}
\mathcal{K}(0,r_0)
=
\left\{
\varphi \in C^0[0,r_0) \cap C^1(0,r_0)
\;\middle|\;
\varphi(0)=0,\ 
\varphi \text{ is concave},\ 
\varphi'(s) > 0 \text{ for all } s \in [0,r_0)
\right\}.
\end{align*}
The function \( F \) satisfies the K\text{\L} property locally at \( {\bm{x}}^* \in \operatorname{dom} \partial F \) if there exist $r_0 > 0,\ \varphi \in \mathcal{K}(0,r_0),\ \varepsilon > 0$ such that
    \begin{align}\label{eq:KLinequality}
      \varphi'(F(\bm{x}) - F({\bm{x}^*})
      )\dist(\bm{0}, \partial F(\bm{x}))\ge 1.
    \end{align}
for all $\bm{x} \in B({\bm{x}^*},\varepsilon)
\cap \bigl[F({\bm{x}^*}) < F(\bm{x}) < F({\bm{x}^*}) + r_0 \,\bigr]$
where $ \varphi$ is a \textit{desingularizing function}~\cite{bolte2016} for $F$ at ${\bm{x}^*}$ and $B({\bm{x}^*}, \varepsilon)$ denote the open Euclidean ball with center ${\bm{x}^*}$ and radius $\varepsilon > 0$.
\end{definition}

The \textit{\text{\L}ojasiewicz gradient inequality}~\cite{loja} corresponds to the case when 
$
\varphi(s) = c s^{1-\theta},
$
for some \( c>0 \) and \( \theta \in [0,1) \). The number \( \theta \) is called the \textit{\text{\L}ojasiewicz exponent}. \eqref{eq:KLinequality} can be reformulated as follows
\begin{align}\label{eq:lojasiewicz}
   c' (F(\bm{x}) - F({\bm{x}^*}))^{\theta}\le \dist(\bm{0}, \partial F(\bm{x})),
   \quad c' = \bigl[(1-\theta)c\bigr]^{-1}.
\end{align}

\subsection{Characterization of \text{\L}ojasiewicz inequalities for convex functions}
 To analyze the local behavior of the function around its minimizers, we derive two geometric properties based on the K\text{\L} inequality.
 Note that when $F$ is convex, the Lojasiewicz inequalities~\eqref{eq:lojasiewicz} implies the \textit{Growth condition} $\mathbf{H2}(r)$ mentioned below with exponent \( \theta = 1 - \frac{1}{r} \). (see~\cite{bolte2016} Section 3, \cite{aujol2019} Section 2). 

\begin{definition}[Flatness and growth conditions] Let \( F :\mathbb{R}^{N} \to  \mathbb{R} \cup \{+\infty\}\) be a proper, lower semicontinuous, convex, not necessarily differentiable function. Let $r_0>0$, $\varepsilon>0$, $K'>0$ and ${\bm{x}}^* = \arg\min F$. Take an arbitary element $\zeta \in \partial F(\bm{x})$.
  \begin{itemize}
  \item[(i)] [Flatness condition \( \mathbf{H1}(\gamma) \)]
  The function \( F \) satisfies hypothesis \( \mathbf{H1}(\gamma) \) with $\gamma \geq 1$ if, for any minimizer \( \bm{x}^{*} \in \mathcal{X}^* \), there exists a positive constant \( \varepsilon \) such that:
  \begin{equation}\label{eq:h1}
    F(\bm{x}) - F^* \leq \frac{1}{\gamma} \langle \zeta, \bm{x} - \bm{x}^{*} \rangle
  \end{equation}
  for all $\bm{x} \in B(\bm{x}^*, \varepsilon) \cap \{ \bm{x} \mid F(\bm{x}^*) < F(\bm{x}) < F(\bm{x}^*) + r_0 \}$.

\item[(ii)] [Growth condition \( \mathbf{H2}(r) \)]
  The function \( F \) satisfies hypothesis \( \mathbf{H2}(r) \) with $r \geq 1$ if, for any minimizer \( \bm{x}^{*} \in \mathcal{X}^* \), there exist positive constants \( \varepsilon \) and \( K' \) such that:
  \begin{equation}\label{eq:h2}
    K' \cdot \operatorname{dist}(\bm{x}, \mathcal{X}^*)^r \leq F(\bm{x}) - F^*
  \end{equation}
  for all $\bm{x} \in B(\bm{x}^*, \varepsilon) \cap \{ \bm{x} \mid F(\bm{x}^*) < F(\bm{x}) < F(\bm{x}^*) + r_0 \}$.
  \end{itemize}
\end{definition}
 In the \textit{Flatness condition} $\mathbf{H1}(\gamma)$, The case $\gamma = 1$ corresponds to a convex function,
  while $\gamma > 1$ can be viewed as a flatness condition around its minimizers.
   Indeed, convex functions already satisfy this condition; however, it is introduced here in order to derive more refined convergence rates. $F$ also naturally satisfies $\mathbf{H2}(r)$ if and only if $r \ge \gamma$.

\subsection{Flatness and growth conditions for Lagrangian functions}
We extend the above conditions in terms of the assumptions on the Lagrangian function.
\begin{lemma}\label{lemma1}
  [Flatness condition for Lagrangian function]
If the objective function $F$ satisfies hypothesis \textbf{H1}$(\gamma)$ with $\gamma \geq 1$, then the Lagrangian function satisfies the following inequality. if, for any minimizer \( (\bm{x}^{*}, \bm{\bm{\lambda}}^*) \in (\mathcal{\bm{X}}^*, \bm{\bm{\Lambda}}^*)\), there exists a positive constant \( \varepsilon\) such that:
\begin{align}\label{lemma:flatness}
  \forall \begin{pmatrix}
      \bm{x} \\
      \bm{\lambda}
    \end{pmatrix} \in B\Bigl(\begin{pmatrix}
      \bm{x}^* \\
      \bm{\lambda}^*
    \end{pmatrix}, \varepsilon\Bigr), \quad 
  \mathcal{L}(\bm{x},\bm{\lambda}^*) - \mathcal{L}(\bm{x}^{*}, \bm{\lambda})\leq 
  \frac{1}{\gamma}\langle \zeta + \bm{A}^{\top}\bm{\lambda}, \bm{x} - \bm{x}^{*}\rangle- \langle \bm{A} \bm{x} - \bm{b}, \bm{\lambda} - \bm{\lambda}^*\rangle
. \end{align}
\begin{proof}
It follows from~\eqref{eq:lagrangian}, \eqref{eq:h1} and $\bm{A}\bm{x}^*- \bm{b}=0$ that
\begin{align*}
  \mathcal{L}(\bm{x},\bm{\lambda}^*) - \mathcal{L}(\bm{x}^{*}, \bm{\lambda})
  &=
\bigl(F(\bm{x}) +\langle \bm{\lambda}^*,  \bm{A} \bm{x} - \bm{b}\rangle\bigr)
        -\bigl(F^* +\langle \bm{\lambda},  \bm{A} \bm{x}^{*} - \bm{b} \rangle\bigr)\\
  &= F(\bm{x})-F^* + \langle \bm{\lambda}^*,  \bm{A} \bm{x} - \bm{b}\rangle\\
  &\leq \frac{1}{\gamma}\langle\zeta,  \bm{x} - \bm{x}^{*}\rangle
  +\langle \bm{A}^{\top}\bm{\lambda},  \bm{x} - \bm{x}^{*}\rangle
  - 
     \langle \bm{\lambda},  \bm{A} \bm{x} -  \bm{A} \bm{x}^{*}
  \rangle
+ \langle \bm{\lambda}^*,  \bm{A} \bm{x} - \bm{b}\rangle\\\notag
  &= \frac{1}{\gamma}\langle\zeta + \bm{A}^{\top}\bm{\lambda}, \bm{x} - \bm{x}^{*}\rangle - \langle \bm{A} \bm{x} - \bm{b}, \bm{\lambda} - \bm{\lambda}^*\rangle
   \end{align*}

\end{proof}
\end{lemma}

\begin{assumption}\label{ass1}
  [Growth condition for Lagrangian function]
There exist positive constants $\varepsilon$ and $K$ such that the Lagrangian function satisfies the following inequality for any minimizer $(\bm{x}^{*}, \bm{\lambda}^*) \in (\mathcal{\bm{X}}^*, \bm{\Lambda}^*)$:
\begin{align}\label{lemma:growth}
 \forall \begin{pmatrix}
      \bm{x} \\
      \bm{\lambda}
    \end{pmatrix} \in B\Bigl(\begin{pmatrix}
      \bm{x}^* \\
      \bm{\lambda}^*
    \end{pmatrix}, \varepsilon\Bigr), \quad 
  K \cdot \dist(\bm{x} , \mathcal{\bm{X}}^{*})^r
 \leq \mathcal{L}(\bm{x},\bm{\lambda}^*) - \mathcal{L}(\bm{x}^{*}, \bm{\lambda}).
\end{align}
where $1\le r \le2$.

\begin{remark}
If the condition $K' - \kappa\left\|\bm{\lambda}^*\right\| > 0$  (where $\kappa$ is the largest singular value of $\bm{A}$) holds
 and $\varepsilon \ll 1$, and if the objective function $F$ satisfies the hypothesis \textbf{H2}$(r)$ with $r=1$, then the Lagrangian function satisfies Assumption~\ref{ass1}.
Using \eqref{eq:saddle} and \eqref{eq:h2}, we have
  \begin{align}\label{eq:ass1}
   K' \cdot \dist(\bm{x} , \mathcal{\bm{X}}^{*})^r
    &\leq F(\bm{x}) - F(\bm{x}^*)\notag\\
    &= \mathcal{L}(\bm{x},\bm{\lambda}^*) - \mathcal{L}(\bm{x}^{*}, \bm{\lambda}^*) - \langle \bm{\lambda}^*, \bm{A}\bm{x} - \bm{b}\rangle\notag\\
    &= \mathcal{L}(\bm{x},\bm{\lambda}^*) - \mathcal{L}(\bm{x}^{*}, \bm{\lambda}) - \langle \bm{\lambda}^*, \bm{A}\bm{x} - \bm{b}\rangle.
  \end{align}
From the Cauchy-Schwarz inequality,
  \begin{align}\label{eq:ass1-5}
      \langle \bm{\lambda}^*, \bm{A}\bm{x} - \bm{b}\rangle 
      &\le
      \kappa\left\|\bm{\lambda}^*\right\|\dist(\bm{x} , \mathcal{\bm{X}}^{*})^r.
  \end{align}
Applying \eqref{eq:ass1-5} to \eqref{eq:ass1}, we have
  \begin{align*}\label{eq:ass1-3}
   (K' - \kappa\left\|\bm{\lambda}^*\right\|)  \cdot \dist(\bm{x} , \mathcal{\bm{X}}^{*})^r
  &\leq \mathcal{L}(\bm{x}, \bm{\lambda}^*) - \mathcal{L}(\bm{x}^{*}, \bm{\lambda}).
\end{align*}
Thus, we obtain
\begin{align*}
    K \cdot \dist(\bm{x} , \mathcal{\bm{X}}^{*})^r
  \le
  \mathcal{L}(\bm{x}, \bm{\lambda}^*) - \mathcal{L}(\bm{x}^{*}, \bm{\lambda}).
\tag*{$\square$}
\end{align*}
\end{remark}
\end{assumption}

%% file: chapter5_0521.tex
\section{Continuous dynamical systems}
In this section, we state the optimal convergence rates that can be achieved.
We consider the following second-order ordinary differential inclusion (ODI), called the \textit{Accelerated Primal-Dual Flow}:
\begin{align}\label{eq:dynamics}
  \begin{cases}
  0 \in 2 \ddot{\bm{x}}(t) + \frac{\alpha}{t}\dot{\bm{x}}(t)+ \partial_{\bm{x}} \mathcal{L}(\bm{x}(t),\bm{\lambda}(t))\\
  0 = \frac{\rho}{t}\dot{\bm{\lambda}}(t) - \nabla_{\bm{\lambda}}\mathcal{L}(\bm{x}(t) + \dot{\bm{x}}(t), \bm{\lambda}(t))
  \end{cases}
\end{align}
with $t_0 > 0, \bm{x}(t_0) = \bm{x}_0$ and $\dot{\bm{x}}(t_0) = \bm{v}_0$.
We equip a Lyapunov function $\mathcal{E}(t)$:
\begin{align}\label{eq:Lyapunov}
  \mathcal{E}(t) &=\frac{t^2}{2}\left( \mathcal{L}(\bm{x}(t), \bm{\lambda}^*) - \mathcal{L}(\bm{x}^*, \bm{\lambda}(t)) \right)\notag\\
  &\quad + \frac{1}{2} \left\| s(\bm{x}(t) - \bm{x}^*)+ t\dot{\bm{x}}(t)\right\|^2 
  + \frac{\xi}{2} \left\| \bm{x}(t) - \bm{x}^* \right\|^2
  + \theta \| \bm{\lambda}(t) - \bm{\lambda}^* \|^2
\end{align}
where $(\bm{x}^*, \bm{\lambda}^*)$ is a saddle point of $\mathcal{L}$ and $s, \xi, \theta$ are three real  constants.
$\mathcal{E}$ is nonnegative, vanishes only at the equilibrium point $(\bm{x}^*, \bm{\lambda}^*)$, and decreases along trajectories of the ODI~\eqref{eq:dynamics}.

\begin{lemma}
 Assume that \textbf{H1}$(\gamma)$ with $\gamma \geq 1$. For sufficiently large $t$, we obtain an upper bound for the Lyapunov function by choosing $\xi = s(s - \frac{\alpha}{2} + 1)$.
    \begin{align}
 \mathcal{E}'(t)    
&\leq t\Bigl(1 - \frac{s\gamma}{2}\Bigr)\{ \mathcal{L}(\bm{x},\bm{\lambda}^*) - \mathcal{L}(\bm{x}^*,\bm{\lambda})\}
 +  \frac{1}{t}\Bigl(s -\frac{\alpha}{2} + 1\Bigr)\left\|s (\bm{x} - \bm{x}^{*}) + t\dot{\bm{x}}\right\|^2\notag\\
&\quad+ \frac{1}{t}\left\{\Big(\frac{2\theta}{\rho}- \frac{s\gamma}{2}\Big)\cdot\frac{\kappa^2}{2} -s^2\Big(s-\frac{\alpha}{2} + 1\Big)\right\}\left\| \bm{x} - \bm{x}^{*}\right\|^2.
\end{align}
\end{lemma}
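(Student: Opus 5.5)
The plan is to differentiate $\mathcal{E}$ along a strong solution of \eqref{eq:dynamics} term by term. I will use the chain rule for convex lower semicontinuous functions along absolutely continuous curves (Br\'ezis): for a.e.\ $t$ there is $\zeta(t)\in\partial F(\bm{x}(t))$ with $\frac{d}{dt}F(\bm{x}(t))=\langle\zeta,\dot{\bm{x}}\rangle$, and the same $\zeta$ realizes the inclusion in \eqref{eq:dynamics}. Write $\bm{g}:=\zeta+\bm{A}^\top\bm{\lambda}$ and $\bm{w}:=s(\bm{x}-\bm{x}^*)+t\dot{\bm{x}}$.

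The primal equation then reads $t\ddot{\bm{x}}=-\frac{\alpha}{2}\dot{\bm{x}}-\frac{t}{2}\bm{g}$. The dual equation gives $\dot{\bm{\lambda}}=\frac{t}{\rho}\bigl(\bm{A}(\bm{x}+\dot{\bm{x}})-\bm{b}\bigr)$. Using $\bm{A}\bm{x}^*=\bm{b}$, the four pieces of $\mathcal{E}'$ are as follows.
\begin{itemize}
\item[(a)] $t\{\mathcal{L}(\bm{x},\bm{\lambda}^*)-\mathcal{L}(\bm{x}^*,\bm{\lambda})\}+\frac{t^2}{2}\langle\zeta+\bm{A}^\top\bm{\lambda}^*,\dot{\bm{x}}\rangle$, since $\mathcal{L}(\bm{x}^*,\bm{\lambda})=F^*$ is constant in $\bm{\lambda}$.
\item[(b)] $\langle \bm{w},(s+1)\dot{\bm{x}}+t\ddot{\bm{x}}\rangle=(s+1-\frac{\alpha}{2})\langle\bm{w},\dot{\bm{x}}\rangle-\frac{ts}{2}\langle\bm{g},\bm{x}-\bm{x}^*\rangle-\frac{t^2}{2}\langle\bm{g},\dot{\bm{x}}\rangle$.
\item[(c)] $\xi\langle\bm{x}-\bm{x}^*,\dot{\bm{x}}\rangle$.
\item[(d)] $\frac{2\theta t}{\rho}\langle\bm{\lambda}-\bm{\lambda}^*,\bm{A}(\bm{x}-\bm{x}^*)+\bm{A}\dot{\bm{x}}\rangle$.
\end{itemize}
The $\zeta$-parts of the $\frac{t^2}{2}$ terms in (a) and (b) cancel. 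What survives is $-\frac{t^2}{2}\langle\bm{\lambda}-\bm{\lambda}^*,\bm{A}\dot{\bm{x}}\rangle$.

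Next I invoke Lemma~\ref{lemma1} in the form $\langle\bm{g},\bm{x}-\bm{x}^*\rangle\ge\gamma\{\mathcal{L}(\bm{x},\bm{\lambda}^*)-\mathcal{L}(\bm{x}^*,\bm{\lambda})\}+\gamma\langle\bm{A}\bm{x}-\bm{b},\bm{\lambda}-\bm{\lambda}^*\rangle$. This is valid once the trajectory lies in the ball $B((\bm{x}^*,\bm{\lambda}^*),\varepsilon)$, which is where the phrase ``sufficiently large $t$'' enters. It turns $-\frac{ts}{2}\langle\bm{g},\bm{x}-\bm{x}^*\rangle$ into $-\frac{ts\gamma}{2}\{\cdots\}-\frac{ts\gamma}{2}\langle\bm{A}(\bm{x}-\bm{x}^*),\bm{\lambda}-\bm{\lambda}^*\rangle$. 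Together with (a), this produces the leading coefficient $t(1-\frac{s\gamma}{2})$.

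For the purely primal quadratic terms I substitute $\dot{\bm{x}}=(\bm{w}-s(\bm{x}-\bm{x}^*))/t$ in (b) and (c). This gives
\begin{align*}
\frac{1}{t}\Bigl(s+1-\frac{\alpha}{2}\Bigr)\bigl(\|\bm{w}\|^2-s\langle\bm{w},\bm{x}-\bm{x}^*\rangle\bigr)+\frac{\xi}{t}\bigl(\langle\bm{w},\bm{x}-\bm{x}^*\rangle-s\|\bm{x}-\bm{x}^*\|^2\bigr).
\end{align*}
The choice $\xi=s(s-\frac{\alpha}{2}+1)$ kills the cross term exactly. It leaves $\frac1t(s-\frac{\alpha}{2}+1)\|\bm{w}\|^2-\frac{s^2}{t}(s-\frac{\alpha}{2}+1)\|\bm{x}-\bm{x}^*\|^2$, which are the second term and the last part of the third term of the claimed bound.

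The main obstacle is the dual coupling. The terms left over are $t\bigl(\frac{2\theta}{\rho}-\frac{s\gamma}{2}\bigr)\langle\bm{\lambda}-\bm{\lambda}^*,\bm{A}(\bm{x}-\bm{x}^*)\rangle$ and $\bigl(\frac{2\theta t}{\rho}-\frac{t^2}{2}\bigr)\langle\bm{\lambda}-\bm{\lambda}^*,\bm{A}\dot{\bm{x}}\rangle$. These must be bounded by $\frac1t\bigl(\frac{2\theta}{\rho}-\frac{s\gamma}{2}\bigr)\frac{\kappa^2}{2}\|\bm{x}-\bm{x}^*\|^2$.

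My first attempt is the following. Rewrite $t\bm{A}\dot{\bm{x}}=\bm{A}\bm{w}-s\bm{A}(\bm{x}-\bm{x}^*)$ to express both couplings through $\bm{A}(\bm{x}-\bm{x}^*)$ and $\bm{A}\bm{w}$. Then apply Young's inequality $\langle\bm{u},\bm{A}\bm{v}\rangle\le\frac{1}{2}\|\bm{u}\|^2+\frac{\kappa^2}{2}\|\bm{v}\|^2$ with a $t$-dependent weight, so that the $\bm{x}$-part carries the factor $\frac{1}{t}\frac{\kappa^2}{2}$. The $\bm{\lambda}$-parts and the $\bm{w}$-coupling are then to be shown nonpositive, or dominated, for large $t$: the coefficient $\frac{2\theta t}{\rho}-\frac{t^2}{2}$ becomes negative once $t>4\theta/\rho$, and I expect this sign to absorb the residual $\bm{\lambda}$ contributions.

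I am least sure this bookkeeping reproduces the exact constant $\frac{\kappa^2}{2}\bigl(\frac{2\theta}{\rho}-\frac{s\gamma}{2}\bigr)$ with the stated $\frac1t$ scaling. I would check that carefully, possibly using a normalization such as $\theta$ scaled with $\rho$, before finalizing.
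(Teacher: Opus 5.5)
Up to and including the choice $\xi=s(s-\frac{\alpha}{2}+1)$, your computation is correct and coincides with the paper's. The gap is the dual coupling, and it is not a bookkeeping issue.

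After Lemma~\ref{lemma1}, your bound for $\mathcal{E}'$ is affine in $\bm{\lambda}$. The dual flow has no damping, so $\theta\|\bm{\lambda}-\bm{\lambda}^*\|^2$ contributes only cross terms, and nowhere is there a negative multiple of $\|\bm{\lambda}-\bm{\lambda}^*\|^2$ that could absorb anything. Young's inequality bounds $c\langle\bm{u},\bm{v}\rangle$ only by $|c|$ times a sum of squares. When $c=\frac{2\theta t}{\rho}-\frac{t^2}{2}<0$, you may not use $c$ itself as the multiplier, since that reverses the inequality. Rewriting $t\bm{A}\dot{\bm{x}}=\bm{A}\bm{w}-s\bm{A}(\bm{x}-\bm{x}^*)$ only redistributes the same linear-in-$\bm{\lambda}$ terms. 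No constant $\theta$ helps either, including the later $\theta=\alpha\rho/4$.

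In fact the target fails as an inequality in the state. At $\bm{x}=\bm{x}^*$ your identity is exact (Lemma~\ref{lemma1} is not used) and reads
\begin{align*}
\mathcal{E}'(t)=t\Bigl(s+1-\frac{\alpha}{2}\Bigr)\|\dot{\bm{x}}\|^2+\Bigl(\frac{2\theta t}{\rho}-\frac{t^2}{2}\Bigr)\langle\bm{\lambda}-\bm{\lambda}^*,\bm{A}\dot{\bm{x}}\rangle,
\end{align*}
while the claimed right-hand side is $t(s+1-\frac{\alpha}{2})\|\dot{\bm{x}}\|^2$. Taking $\bm{\lambda}-\bm{\lambda}^*=-\eta\bm{A}\dot{\bm{x}}$ with $\eta>0$, $\bm{A}\dot{\bm{x}}\neq 0$ and $t>4\theta/\rho$ violates it. Such states lie in every neighbourhood of $(\bm{x}^*,\bm{\lambda}^*)$. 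So no estimate that uses only the state at time $t$ can deliver the stated bound, and Young's inequality and Lemma~\ref{lemma1} are estimates of exactly that kind.

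The paper's proof closes this step exactly as you anticipate. It multiplies the two Young inequalities by $\frac{2t\theta}{\rho}-\frac{st\gamma}{2}$ and $\frac{2t\theta}{\rho}-\frac{t^2}{2}$, and then, for large $t$, discards the resulting $\|\bm{\lambda}-\bm{\lambda}^*\|^2$ and $\|\dot{\bm{x}}\|^2$ terms as nonpositive. This fails for the reason above: the second multiplier is negative precisely for large $t$. So following the paper will not complete your argument.

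A valid argument would need the couplings to cancel exactly. With the dual equation as written, this is impossible for constant $\theta$, because the $\bm{A}\dot{\bm{x}}$-coefficient $\frac{2\theta t}{\rho}-\frac{t^2}{2}$ vanishes at a single instant only. Cancellation does occur, for example with $\gamma=1$ and $\theta=\rho s/4$, if the dual equation is evaluated at $\bm{x}+\frac{t}{s}\dot{\bm{x}}=\bm{x}^*+\bm{w}/s$ rather than at $\bm{x}+\dot{\bm{x}}$.

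Two smaller points:
\begin{itemize}
\item Reading ``sufficiently large $t$'' as ``the trajectory has entered the ball where \textbf{H1}$(\gamma)$ holds'' presupposes the convergence this lemma is meant to produce.
\item Lemma~\ref{lemma1}, which you invoke, is only justified for $\gamma=1$. Its proof replaces $\frac{1}{\gamma}\langle\zeta,\cdot\rangle+\langle\bm{A}^\top\bm{\lambda},\cdot\rangle$ by $\frac{1}{\gamma}\langle\zeta+\bm{A}^\top\bm{\lambda},\cdot\rangle$.
\end{itemize}
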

\begin{proof}
Note that we take an arbitrary element $\zeta \in \partial F(\bm{x})$ and define $\partial\mathcal{L}(\bm{x}, \bm{\lambda}) = \partial F(\bm{x}) + \bm{A}^{\top}\bm{\lambda}:= \{\zeta + \bm{A}^{\top}\bm{\lambda}|\zeta\in\partial F(\bm{x})\}$. From now on, we denote $(\bm{x}, \bm{\lambda})$ by a time-dependent trajectory $(\bm{x}(t), \bm{\lambda}(t))$. Taking the time derivative of $\mathcal{E}(t)$ defined in~\eqref{eq:Lyapunov} and applying the ODI~\eqref{eq:dynamics} yields the following result.
 \begin{align}\label{eq:diff}
    \mathcal{E'}(t) 
    &= t \{\mathcal{L}(\bm{x}, \bm{\lambda}^*) - \mathcal{L}(\bm{x}^*, \bm{\lambda}) \}
     + \frac{t^2}{2} \left\langle \zeta + \bm{A}^{\top}\bm{\lambda}^*, \dot{\bm{x}} \right\rangle 
     - \frac{t^2}{2} \left\langle \underbrace{\nabla_{\bm{\lambda}}\mathcal{L}(\bm{x}^*, \bm{\lambda})}_{\bm{A}\bm{x}^* - b = 0},\dot{\bm{\lambda}} \right\rangle\notag \\
    &\quad + \left\langle s\dot{\bm{x}}+ t \ddot{\bm{x}} + \dot{\bm{x}}, s( \bm{x} - \bm{x}^{*}) + t\dot{\bm{x}} \right\rangle 
 + \xi \left\langle \dot{\bm{x}},  \bm{x} - \bm{x}^{*} \right\rangle +2\theta\langle\dot{\bm{\lambda}}, \bm{\lambda} - \bm{\lambda}^* \rangle\notag\\
    &= t \{\mathcal{L}(\bm{x}, \bm{\lambda}^*) - \mathcal{L}(\bm{x}^*, \bm{\lambda}) \}\notag\\
      &\quad + \frac{t^2}{2} \left\langle {\zeta + \bm{A}^{\top}\bm{\lambda}^*+{2\ddot{\bm{x}}}},\ \dot{\bm{x}} \right\rangle 
      + (s + 1)t \| \dot{\bm{x}} \|^2 + s t \langle {{\ddot{\bm{x}}}},  \bm{x} - \bm{x}^{*}  \rangle 
      + (s(s + 1) + \xi) \langle \dot{\bm{x}},  \bm{x} - \bm{x}^{*} \rangle \notag\\
      &\quad  +2 \theta\langle{{\dot{\bm{\lambda}}}}, \bm{\lambda} - \bm{\lambda}^* \rangle \notag\\
      &= t \{\mathcal{L}(\bm{x}, \bm{\lambda}^*) - \mathcal{L}(\bm{x}^*, \bm{\lambda})\}\notag\\
    &\quad +\frac{t^2}{2} \left\langle {(\zeta + \bm{A}^{\top}\bm{\lambda}^*){-(\zeta + \bm{A}^{\top}\bm{\lambda})-\frac{\alpha}{t}\dot{\bm{x}}}}, \dot{\bm{x}} \right\rangle
      + (s + 1)t \| \dot{\bm{x}} \|^2 + \frac{st}{2} \langle {{-(\zeta + \bm{A}^{\top}\bm{\lambda})-\frac{\alpha}{t}\dot{\bm{x}}}},  \bm{x} - \bm{x}^{*} \rangle\notag\\
      &\quad +  (s(s + 1) + \xi) \langle \dot{\bm{x}},  \bm{x} - \bm{x}^{*} \rangle
     +2\theta\left\langle{{{\frac{t\nabla_{\bm{\lambda}}\mathcal{L}(\bm{x}+\dot{\bm{x}}, \bm{\lambda})}{\rho}} }}, \bm{\lambda} - \bm{\lambda}^* \right\rangle\notag\\
    &= t \{\mathcal{L}(\bm{x}, \bm{\lambda}^*) - \mathcal{L}(\bm{x}^*, \bm{\lambda})\} \notag\\
    &\quad +\frac{t^2}{2} \left\langle \bm{A}^{\top}(\bm{\lambda}^*-\bm{\lambda}) -\frac{\alpha}{t}\dot{\bm{x}}, \dot{\bm{x}} \right\rangle
      + (s + 1)t \| \dot{\bm{x}} \|^2 - \frac{st}{2} \langle {\zeta + \bm{A}^{\top}\bm{\lambda}+\frac{\alpha}{t}\dot{\bm{x}}},  \bm{x} - \bm{x}^{*} \rangle \notag\\
      &\quad+  (s(s + 1) + \xi) \langle \dot{\bm{x}},  \bm{x} - \bm{x}^{*} \rangle 
       +\frac{2t\theta}{\rho}\langle{\nabla_{\bm{\lambda}}\mathcal{L}(\bm{v}, \bm{\lambda})}, \bm{\lambda} - \bm{\lambda}^* \rangle,
  \end{align}
where $\bm{v} = \bm{x} + \dot{\bm{x}}$.
Here, it follows from \eqref{eq:lagrangian} that
\begin{align*}
\langle \nabla_{\bm{\lambda}} \mathcal{L}(\bm{v}, \bm{\lambda}), \bm{\lambda} - \bm{\lambda}^*\rangle
&=\left\langle \bm{\lambda} - \bm{\lambda}^*,\ \bm{A}\bm{v} - b \right\rangle\\
&=\left\langle 
\bm{v}
- \bm{x}^{*},
\bm{A}^{\top}(\bm{\lambda}^* - \bm{\lambda})
\right\rangle\\
&= \left\langle 
 (\bm{x} - \bm{x}^{*})
+ \dot{\bm{x}},
\ \bm{A}^{\top}(\bm{\lambda}^* - \bm{\lambda})
\right\rangle.
\end{align*}
Thus, rewriting \eqref{eq:diff}, we have
\begin{align}\label{eq:19}
\mathcal{E'}(t)
&= t \{\mathcal{L}(\bm{x}, \bm{\lambda}^*) - \mathcal{L}(\bm{x}^*, \bm{\lambda})\} \notag\\
&\quad 
+ (s + 1)t \left\| \dot{\bm{x}} \right\|^2 
- \frac{st}{2} \left\langle 
\zeta+ \bm{A}^{\top}\bm{\lambda} + \frac{\alpha}{t} \dot{\bm{x}},
 \bm{x} - \bm{x}^{*} 
\right\rangle 
+ (s(s + 1) + \xi) \left\langle 
\dot{\bm{x}}, 
 \bm{x} - \bm{x}^{*} 
\right\rangle \notag\\
&\quad   
+ \frac{t^2}{2} \left\langle 
\bm{A}^{\top}(\bm{\lambda}^* - \bm{\lambda})
- \frac{\alpha}{t} \dot{\bm{x}},
\dot{\bm{x}}
\right\rangle 
+\frac{2t\theta}{\rho}\left\langle 
 (\bm{x} - \bm{x}^{*})
+ \dot{\bm{x}},
\ \bm{A}^{\top}(\bm{\lambda} - \bm{\lambda}^*)
\right\rangle\notag\\
&= t \{\mathcal{L}(\bm{x}, \bm{\lambda}^*) - \mathcal{L}(\bm{x}^*, \bm{\lambda})\}\notag\\
&\quad
+ t\Big(s + 1 - \frac{\alpha}{2}\Big)\left\| \dot{\bm{x}} \right\|^2
- \frac{st}{2} \left\langle 
\zeta+ \bm{A}^{\top}\bm{\lambda}, 
 \bm{x} - \bm{x}^{*} 
\right\rangle 
+ \Big(s(s + 1) + \xi - \frac{s\alpha}{2}\Big) 
\left\langle 
\dot{\bm{x}}, 
 \bm{x} - \bm{x}^{*} 
\right\rangle
\notag\\
&\quad +
\frac{t^2}{2}\left\langle
\bm{A}\dot{\bm{x}} 
,
\bm{\lambda}^* - \bm{\lambda}
\right\rangle
 + \frac{2t\theta}{\rho} \left\langle 
\bm{A}
( \bm{x} - \bm{x}^{*} )
+
\bm{A}
\dot{\bm{x}} 
,
(\bm{\lambda} - \bm{\lambda}^*)
\right\rangle \notag\\
&= t \{\mathcal{L}(\bm{x}, \bm{\lambda}^*) - \mathcal{L}(\bm{x}^*, \bm{\lambda})\}\notag\\
&\quad
+ t\Big(s + 1 - \frac{\alpha}{2}\Big) 
\left\| \dot{\bm{x}} \right\|^2
- \frac{st}{2} \left\langle 
\zeta+ \bm{A}^{\top}\bm{\lambda}, 
 \bm{x} - \bm{x}^{*} 
\right\rangle 
+ \Big(s(s + 1) + \xi - \frac{s\alpha}{2}\Big) 
\left\langle 
\dot{\bm{x}}, 
 \bm{x} - \bm{x}^{*} 
\right\rangle
\notag\\
&\quad 
+ \frac{2t\theta}{\rho} \left\langle 
\bm{A}\bm{x} - \bm{b}
,
\bm{\lambda} - \bm{\lambda}^*
\right\rangle
+
\Big(\frac{2t\theta}{\rho}-\frac{t^2}{2}\Big)
\left\langle
  \bm{A}
\dot{\bm{x}} 
,
\bm{\lambda} - \bm{\lambda}^*
\right\rangle.
\end{align}
Applying ~\eqref{lemma:flatness} to the third term in \eqref{eq:19}, the following relation holds:
\begin{align}
&- \frac{st}{2} \left\langle 
\zeta+ \bm{A}^{\top}\bm{\lambda}, 
 \bm{x} - \bm{x}^{*} 
\right\rangle 
    + \frac{2t\theta}{\rho} \left\langle 
\bm{A}\bm{x} - \bm{b}
,
\bm{\lambda} - \bm{\lambda}^*
\right\rangle\notag\\
&\qquad\quad\leq
- \frac{st}{2}\Bigl(\gamma\{\mathcal{L}(\bm{x}, \bm{\lambda}^*) - \mathcal{L}(\bm{x}^*, \bm{\lambda})\} + \gamma\langle \bm{A}\bm{x} -\bm{b}, \bm{\lambda} - \bm{\lambda}^*\rangle\Bigr)
+ \frac{2t\theta}{\rho} \left\langle 
\bm{A}\bm{x} - \bm{b}
,
\bm{\lambda} - \bm{\lambda}^*
\right\rangle\notag\\
&\qquad\quad= - \frac{st\gamma}{2}\{\mathcal{L}(\bm{x}, \bm{\lambda}^*) - \mathcal{L}(\bm{x}^*, \bm{\lambda})\} + \Bigl(\frac{2t\theta}{\rho} - \frac{st\gamma}{2}\Bigr) \left\langle 
\bm{A}\bm{x} - \bm{b}
,
\bm{\lambda} - \bm{\lambda}^*
\right\rangle\notag\\
&\qquad\quad= - \frac{st\gamma}{2}\{\mathcal{L}(\bm{x}, \bm{\lambda}^*) - \mathcal{L}(\bm{x}^*, \bm{\lambda})\} + \Bigl(\frac{2t\theta}{\rho} - \frac{st\gamma}{2}\Bigr) \left\langle 
\bm{A}(\bm{x} - \bm{x}^*)
,
\bm{\lambda} - \bm{\lambda}^*
\right\rangle.
\end{align}
Then, \eqref{eq:19} can be written by
\begin{align}\label{eq:21}
\mathcal{E'}(t)
&\leq
\Bigl(t - \frac{st\gamma}{2}\Bigr)\{\mathcal{L}(\bm{x}, \bm{\lambda}^*) - \mathcal{L}(\bm{x}^*, \bm{\lambda})\}\notag\\
&\quad
+  t\Big(s + 1 - \frac{\alpha}{2}\Big) 
\left\| \dot{\bm{x}} \right\|^2
+ \Big(s(s + 1) + \xi - \frac{s\alpha}{2}\Big) 
\left\langle 
\dot{\bm{x}}, 
 \bm{x} - \bm{x}^{*} 
\right\rangle
\notag\\
&\quad 
 + \Bigl(\frac{2t\theta}{\rho} - \frac{st\gamma}{2}\Bigr) \left\langle 
\bm{A}(\bm{x} - \bm{x}^*)
,
\bm{\lambda} - \bm{\lambda}^*
\right\rangle
+
\Big(\frac{2t\theta}{\rho}-\frac{t^2}{2}\Big)
\left\langle
  \bm{A}
\dot{\bm{x}} 
,
\bm{\lambda} - \bm{\lambda}^*
\right\rangle.
\end{align}
We apply Young's inequality to the last two inner product terms in \eqref{eq:21} and get
\begin{align}
    &\quad\Bigl(\frac{2t\theta}{\rho} - \frac{st\gamma}{2}\Bigr) \left\langle 
\bm{A}(\bm{x} - \bm{x}^*)
,
\bm{\lambda} - \bm{\lambda}^*
\right\rangle
+
\Big(\frac{2t\theta}{\rho}-\frac{t^2}{2}\Big)
\left\langle
  \bm{A}
\dot{\bm{x}} 
,
\bm{\lambda} - \bm{\lambda}^*
\right\rangle\notag\\
&\leq
 \Big(\frac{2t\theta}{\rho}- \frac{st\gamma}{2}\Big)\Big[\frac{1}{2}
\left\|\frac{1}{t}\bm{A}(
 \bm{x} - \bm{x}^{*})\right\|^2
+\frac{1}{2}\left\|t
(\bm{\lambda} - \bm{\lambda}^*)\right\|^2
\Big]
+
\Big(\frac{2t\theta}{\rho}-\frac{t^2}{2}\Big)
\Big[\frac{1}{2}\left\|\frac{1}{t}
\bm{A}
\dot{\bm{x}} 
\right\|^2
+\frac{1}{2}\left\|t(\bm{\lambda} - \bm{\lambda}^*)\right\|^2
\Big]\notag\\
&
=\Big(\frac{2t\theta}{\rho}- \frac{st\gamma}{2}\Big)\cdot\frac{\kappa^2}{2t^2}
\left\|
 \bm{x} - \bm{x}^{*}\right\|^2 
+\left\{\Big(\frac{2t\theta}{\rho}- \frac{st\gamma}{2}\Big)
\cdot\frac{t^2}{2}+\Big(\frac{2t\theta}{\rho}-\frac{t^2}{2}\Big)\cdot\frac{t^2}{2}\right\}\left\|
\bm{\lambda} - \bm{\lambda}^*\right\|^2\notag\\
&\qquad\qquad + 
\Big(\frac{2t\theta}{\rho}-\frac{t^2}{2}\Big)
\cdot\frac{\kappa^2}{2t^2}\left\|
\dot{\bm{x}} 
\right\|^2.
\end{align}
Under the assumption of $t$ being sufficiently large so that the coefficient of the second order term is non-positive, we can write
\begin{align}
    &\Bigl(\frac{2t\theta}{\rho} - \frac{st\gamma}{2}\Bigr) \left\langle 
\bm{A}(\bm{x} - \bm{x}^*)
,
\bm{\lambda} - \bm{\lambda}^*
\right\rangle
+
\Big(\frac{2t\theta}{\rho}-\frac{t^2}{2}\Big)
\left\langle
  \bm{A}
\dot{\bm{x}} 
,
\bm{\lambda} - \bm{\lambda}^*
\right\rangle
\leq\Big(\frac{2t\theta}{\rho}- \frac{st\gamma}{2}\Big)\cdot\frac{\kappa^2}{2t^2}
\left\|
 \bm{x} - \bm{x}^{*}\right\|^2.
 \end{align}
Thus, \eqref{eq:21} yields
\begin{align}\label{eq:E'}
\mathcal{E'}(t)&\leq\Bigl(t - \frac{st\gamma}{2}\Bigr)\{\mathcal{L}(\bm{x}, \bm{\lambda}^*) - \mathcal{L}(\bm{x}^*, \bm{\lambda})\}\notag\\
&
+  t\Big(s + 1 - \frac{\alpha}{2}\Big) 
\left\| \dot{\bm{x}} \right\|^2
+ \Big(s(s + 1) + \xi - \frac{s\alpha}{2}\Big) 
\left\langle 
\dot{\bm{x}}, 
 \bm{x} - \bm{x}^{*} 
\right\rangle
+ \Big(\frac{2t\theta}{\rho}- \frac{st\gamma}{2}\Big)\cdot\frac{\kappa^2}{2t^2}
\left\|
 \bm{x} - \bm{x}^{*}\right\|^2
.\end{align}
Because we have
\begin{align*}
 {{t\left\|\dot{\bm{x}}\right\|^2}} = \frac{1}{t}\left\|s (\bm{x} - \bm{x}^{*}) + t\dot{\bm{x}}\right\|^2 - 2s\langle\dot{\bm{x}},  \bm{x} - \bm{x}^{*}\rangle - \frac{s^2}{t}\left\| \bm{x} - \bm{x}^{*}\right\|^2,
\end{align*}
it follows from \eqref{eq:E'} that
\begin{align}
    \mathcal{E'}(t)&\leq \Bigl(t - \frac{st\gamma}{2}\Bigr)\{ \mathcal{L}(\bm{x},\bm{\lambda}^*) - \mathcal{L}(\bm{x}^*,\bm{\lambda})\}\notag\\    
     &\quad+ \Big(s -\frac{\alpha}{2} + 1\Big) \left\{\frac{1}{t}\left\|s (\bm{x} - \bm{x}^{*}) + t\dot{\bm{x}}\right\|^2 - 2s{\langle\dot{\bm{x}},  \bm{x} - \bm{x}^{*}\rangle} - \frac{s^2}{t}\left\| \bm{x} - \bm{x}^{*}\right\|^2 \right\}\notag\\
     &\quad + \Big(s(s + 1) + \xi -\frac{s \alpha}{2}\Big) {\langle \dot{\bm{x}},  \bm{x} - \bm{x}^{*} \rangle}
      + \Big(\frac{2t\theta}{\rho}- \frac{st\gamma}{2}\Big)\cdot\frac{\kappa^2}{2t^2}\left\| \bm{x} - \bm{x}^{*}\right\|^2
  \notag\\
   &=\Bigl(t - \frac{st\gamma}{2}\Bigr)\{ \mathcal{L}(\bm{x},\bm{\lambda}^*) - \mathcal{L}(\bm{x}^*,\bm{\lambda})\}\notag\\
    &\quad+ \Big(s -\frac{\alpha}{2} + 1\Big) \frac{1}{t}\left\|s (\bm{x} - \bm{x}^{*}) + t\dot{\bm{x}}\right\|^2
+\left\{\Big(\frac{2t\theta}{\rho}- \frac{st\gamma}{2}\Big)\cdot\frac{\kappa^2}{2t^2} -\frac{s^2}{t}\Big(s-\frac{\alpha}{2} + 1\Big)\right\} \left\| \bm{x} - \bm{x}^{*}\right\|^2 
    \notag\\
    &\quad +\Big(-s^2-s+\xi  + \frac{s\alpha}{2}\Big){\langle\dot{\bm{x}},  \bm{x} - \bm{x}^{*}\rangle}\notag\\
 &=t\Bigl(1 - \frac{s\gamma}{2}\Bigr)\{ \mathcal{L}(\bm{x},\bm{\lambda}^*) - \mathcal{L}(\bm{x}^*,\bm{\lambda})\}
 + \frac{1}{t}\Bigl(s -\frac{\alpha}{2} + 1\Bigr)\left\|s (\bm{x} - \bm{x}^{*}) + t\dot{\bm{x}}\right\|^2\notag\\
&\quad+\frac{1}{t}\left\{\Big(\frac{2\theta}{\rho}- \frac{s\gamma}{2}\Big)\cdot\frac{\kappa^2}{2} -s^2\Big(s-\frac{\alpha}{2} + 1\Big)\right\}\left\| \bm{x} - \bm{x}^{*}\right\|^2 
  +\Big({\xi -s\Big(s -\frac{\alpha}{2} + 1\Big)}\Big){\langle\dot{\bm{x}},  \bm{x} - \bm{x}^{*}\rangle}.
\end{align}
Therefore, applying \ $\xi = s(s - \frac{\alpha}{2} + 1)$, we obtain
\begin{align}
  \mathcal{E}'(t)    
&\leq t\Bigl(1 - \frac{s\gamma}{2}\Bigr)\{ \mathcal{L}(\bm{x},\bm{\lambda}^*) - \mathcal{L}(\bm{x}^*,\bm{\lambda})\}
 + \frac{1}{t}\Bigl(s -\frac{\alpha}{2} + 1\Bigr) \left\|s (\bm{x} - \bm{x}^{*}) + t\dot{\bm{x}}\right\|^2\notag\\
&\quad+\frac{1}{t}\left\{\Big(\frac{2\theta}{\rho}- \frac{s\gamma}{2}\Big)\cdot\frac{\kappa^2}{2} -s^2\Big(s-\frac{\alpha}{2} + 1\Big)\right\}\left\| \bm{x} - \bm{x}^{*}\right\|^2.
\end{align}
\end{proof}

\begin{theorem}[Convergence rate]
Assume that \textbf{H1}$(\gamma)$ holds with $\gamma \geq 1$ and Assumption 1 holds. 
For sufficiently large $t$, let the parameters $\alpha$ and $\rho$ be chosen such that the condition $\theta = \alpha\rho / 4$ is satisfied. 
Then, we define the function $\mathrm{rate}(\alpha, \gamma)$ as the expected rate, which is given by:
\begin{align*}
    \textit{rate}(\alpha, \gamma) := 
\begin{cases}
\displaystyle \frac{\gamma\alpha }{\gamma + 2} & \text{if } \gamma > 2,\ \alpha \in (0, \tilde{\alpha}] \text{ or } \alpha \in [\tilde{\alpha}, \alpha_1],\ \alpha_2 \le \alpha \ \left(\kappa^2 \ge -\frac{2}{\gamma(\gamma-2)}\right) \\
& \text{ or } \tilde{\alpha} \le \alpha \ \left(\kappa^2 < -\frac{2}{\gamma(\gamma-2)}\right) \text{ or } \alpha \in (\alpha_1, \alpha_2) \ \left(\kappa^2 \ge -\frac{2}{\gamma(\gamma-2)}\right), \\
\displaystyle \frac{\gamma\alpha }{\gamma + 2} & \text{if } \gamma = 2 \ (\kappa^2 < 2),\ \alpha \in (0, -2(\kappa^2 - 2)] \text{ or } \alpha=4 \text{ or } \alpha > 4, \\
\displaystyle \frac{\gamma\alpha }{\gamma + 2} & \text{if } \gamma \in [1,2)\ \left(\kappa^2 \le -\frac{2}{\gamma(\gamma-2)} \text{ except for } \gamma = 1, \kappa^2 = -\frac{2}{\gamma(\gamma-2)}\right),\\
& \alpha \in (0, \alpha_2] \text{ or } \alpha > \tilde{\alpha}. 
\end{cases}
 \end{align*}
\end{theorem}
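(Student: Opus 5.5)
The plan is to follow the Lyapunov scheme of Aujol et al., using the upper bound on $\mathcal{E}'(t)$ from the preceding lemma with $\xi = s(s-\frac{\alpha}{2}+1)$. The goal is a differential inequality of the form $\mathcal{E}'(t) \le -\frac{c}{t}\mathcal{E}(t)$ with $c = 2 - \mathrm{rate}(\alpha,\gamma)$ (or equivalently $\mathcal{E}' \le \frac{2-\mathrm{rate}}{t}\cdot(\ldots)$). Integrating it gives $\mathcal{E}(t) = \mathcal{O}(t^{2-\mathrm{rate}})$, and hence
\begin{align*}
\mathcal{L}(\bm{x}(t),\bm{\lambda}^*) - \mathcal{L}(\bm{x}^*,\bm{\lambda}(t)) \le \frac{2\mathcal{E}(t)}{t^2} = \mathcal{O}\bigl(t^{-\mathrm{rate}(\alpha,\gamma)}\bigr).
\end{align*}

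\textbf{Step 1: reduce to a comparison with $\mathcal{E}$.} For a parameter $p$ I would compute
\begin{align*}
\mathcal{E}'(t) + \frac{p}{t}\mathcal{E}(t) \le{}& t\Bigl(1-\frac{s\gamma}{2}+\frac{p}{2}\Bigr)\{\mathcal{L}(\bm{x},\bm{\lambda}^*)-\mathcal{L}(\bm{x}^*,\bm{\lambda})\} + \frac{1}{t}\Bigl(s-\frac{\alpha}{2}+1+\frac{p}{2}\Bigr)\|s(\bm{x}-\bm{x}^*)+t\dot{\bm{x}}\|^2\\
&+ \frac{1}{t}\Bigl\{\Bigl(\frac{2\theta}{\rho}-\frac{s\gamma}{2}\Bigr)\frac{\kappa^2}{2} - s^2\Bigl(s-\frac{\alpha}{2}+1\Bigr) + \frac{p\xi}{2}\Bigr\}\|\bm{x}-\bm{x}^*\|^2 + \frac{p\theta}{t}\|\bm{\lambda}-\bm{\lambda}^*\|^2 .
\end{align*}
Then I would choose $s$ and $p$ so that the first two coefficients are nonpositive. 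Taking $s = \frac{2\alpha}{\gamma+2}$ and $p = s\gamma - 2 = \frac{2\gamma\alpha}{\gamma+2} - 2$ makes both coefficients vanish simultaneously, which explains the expected rate $\frac{\gamma\alpha}{\gamma+2} = \frac{p+2}{2}\cdot\frac{1}{1}$ after normalising.

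\textbf{Step 2: handle the $\|\bm{x}-\bm{x}^*\|^2$ term.} Substituting $\theta = \alpha\rho/4$ gives $\frac{2\theta}{\rho} = \frac{\alpha}{2}$, so the remaining coefficient becomes an explicit polynomial in $\alpha$ with parameters $\gamma$ and $\kappa^2$. Whenever this coefficient is positive, I would absorb it via Assumption~\ref{ass1}: replace $\|\bm{x}-\bm{x}^*\|^2$ by $\dist(\bm{x},\mathcal{X}^*)^2$ (choosing $\bm{x}^*$ as the projection) and then bound it by $K^{-2/r}(\mathcal{L}(\bm{x},\bm{\lambda}^*)-\mathcal{L}(\bm{x}^*,\bm{\lambda}))^{2/r}$. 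Because $r \le 2$ and the gap is small for large $t$, this term is of lower order relative to the $t\cdot(\text{gap})$ term, and a slight decrease of $p$ absorbs it.

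\textbf{Step 3: the $\lambda$ term.} The $\frac{p\theta}{t}\|\bm{\lambda}-\bm{\lambda}^*\|^2$ term should be controlled using the negative $\lambda$ coefficient from the Young step, which was discarded in the lemma. I would reinstate it: for large $t$ that coefficient behaves like $-\frac{t^3}{4}(\ldots)$ and dominates.

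\textbf{Step 4: case analysis.} The main obstacle is the sign analysis of the $\|\bm{x}-\bm{x}^*\|^2$ coefficient as a quadratic in $\alpha$. Its roots $\alpha_1,\alpha_2$, the threshold $\tilde\alpha$, and the discriminant condition $\kappa^2 \gtrless -\frac{2}{\gamma(\gamma-2)}$ produce the listed regimes $\gamma>2$, $\gamma=2$ (where the quadratic degenerates, yielding $-2(\kappa^2-2)$ and $\alpha=4$), and $\gamma\in[1,2)$. I would treat each regime by checking that either the coefficient is nonpositive, or it is positive but absorbable via Step 2. The final step is to integrate $(t^{p}\mathcal{E})' \le 0$.
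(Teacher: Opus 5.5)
\textbf{There is a genuine gap.} Your skeleton matches the paper's: weight $\mathcal{H}(t)=t^p\mathcal{E}(t)$, kill the coefficients of $a(t)=t\{\mathcal{L}(\bm{x},\bm{\lambda}^*)-\mathcal{L}(\bm{x}^*,\bm{\lambda})\}$ and $b(t)=\frac{1}{t}\|s(\bm{x}-\bm{x}^*)+t\dot{\bm{x}}\|^2$, then do a sign analysis under $\theta=\alpha\rho/4$, with Assumption~1 in the bad cases. But Step~1 is wrong.

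With $p=s\gamma-2$, the $b(t)$-coefficient equals $s-\frac{\alpha}{2}+1+\frac{p}{2}=\frac{(\gamma+2)s-\alpha}{2}$. For your $s=\frac{2\alpha}{\gamma+2}$ this is $\frac{\alpha}{2}>0$, not $0$. No normalisation repairs this. The term is only controlled by $\frac{\alpha}{t}\mathcal{E}$, so you are left with $\mathcal{E}'+\frac{p}{t}\mathcal{E}\lesssim\frac{\alpha}{t}\mathcal{E}$ and lose $\alpha$ in the exponent. Had your choice worked, it would have given $p+2=\frac{2\gamma\alpha}{\gamma+2}$, twice the claimed rate; that was the warning sign.

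The correct choice is $s=\frac{\alpha}{\gamma+2}$ and $p=\frac{\gamma\alpha}{\gamma+2}-2$. Then $p+2$ is exactly $\mathrm{rate}(\alpha,\gamma)$, and the target is $\mathcal{E}'\le-\frac{p}{t}\mathcal{E}$. Your $c=2-\mathrm{rate}$ has the wrong sign and contradicts $\mathcal{E}=\mathcal{O}(t^{2-\mathrm{rate}})$.

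Step~4 hinges on this choice. The threshold $\tilde{\alpha}=\frac{2(\gamma+2)}{\gamma}$ is where both $\xi=\frac{\alpha(2(\gamma+2)-\gamma\alpha)}{2(\gamma+2)^2}$ and $K_2=p\theta$ change sign. The values $\alpha_1,\alpha_2$ (and $-2(\kappa^2-2)$ when $\gamma=2$) are the roots of $-\gamma(\gamma-2)\alpha^2+4(\gamma-1)(\gamma+2)\alpha+2(\kappa^2-2)(\gamma+2)^2$. All of these are computed with $s=\frac{\alpha}{\gamma+2}$. With your $s$ the thresholds move, and the listed regimes cannot be recovered.

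Steps~2--3 also fall short of what the theorem claims.

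First, the listed regimes with $\alpha\ge\tilde{\alpha}$ have $\xi\le 0$: $\gamma>2$ with $\alpha\ge\tilde{\alpha}$, $\gamma=2$ with $\alpha\ge 4$, and $\gamma\in[1,2)$ with $\alpha>\tilde{\alpha}$. There $\mathcal{E}$ contains a nonpositive multiple of $\|\bm{x}-\bm{x}^*\|^2$, so your closing inequality $\mathcal{L}(\bm{x},\bm{\lambda}^*)-\mathcal{L}(\bm{x}^*,\bm{\lambda})\le 2\mathcal{E}/t^2$ is not available. The paper uses Assumption~1 a second time, in the form $c(t)=\frac1t\|\bm{x}-\bm{x}^*\|^2\le\frac{1}{Kt^2}a(t)$, to get $\mathcal{H}(t)\ge\frac{t^{p+1}}{4}a(t)$ for $t\ge t_1$.

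Second, absorbing the positive $\|\bm{x}-\bm{x}^*\|^2$ term by ``a slight decrease of $p$'' only yields $\mathcal{O}(t^{-\mathrm{rate}+\varepsilon})$. Your Step~2 estimate, using $r\le 2$ and boundedness of the gap, is the right ingredient. Combine it with the lower bound above to get $\mathcal{H}'(t)\le\frac{C}{t^3}\mathcal{H}(t)$. Since $t^{-3}$ is integrable, Gr\"onwall keeps $\mathcal{H}$ bounded and the exact exponent survives; this is what the paper does.

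Third, Step~3 does not work. In the lemma's proof, the negative $\|\bm{\lambda}-\bm{\lambda}^*\|^2$ coefficient comes from multiplying a Young \emph{upper} bound for $\langle\bm{A}\dot{\bm{x}},\bm{\lambda}-\bm{\lambda}^*\rangle$ by $\frac{2t\theta}{\rho}-\frac{t^2}{2}=\frac{t}{2}(\alpha-t)$, which is negative for $t>\alpha$. A correct estimate of that product gives a positive $\|\bm{\lambda}-\bm{\lambda}^*\|^2$ coefficient, whatever Young weights you use, so there is nothing to reinstate. The paper does not take this route. It keeps $K_2=p\theta$, which is $\le 0$ exactly when $\alpha\le\tilde{\alpha}$; this is the source of Case~1 and the $\tilde{\alpha}$-thresholds in the statement. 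In the regimes with $K_2>0$, it folds $d(t)$ into the same Gr\"onwall bound as $c(t)$. Be aware that Assumption~1 only controls the $\bm{x}$-component, so the $\bm{\lambda}$-term remains the delicate point of the argument.
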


\begin{proof}
We denote $\mathcal{H}(t) = t^p\mathcal{E}(t)$. Taking the derivative with respect to $t$, from \eqref{eq:Lyapunov} we get
\begin{align}\label{eq:H''}
  \mathcal{H}'(t) &= t^p\mathcal{E}'(t) + pt^{p-1}\mathcal{E}(t)\notag\\
&= t^{p-1}\Big[t\mathcal{E}'(t) + p\mathcal{E}(t)\Big]\notag\\
&\leq t^{p-1}\Big[t\Bigl(t\Big(1 - \frac{s\gamma}{2}\Big)\left\{\mathcal{L}(\bm{x},\bm{\lambda}^*)- \mathcal{L}(\bm{x}^*,\bm{\lambda})\right\}\notag\\
&\quad + \frac{1}{t}{\Big(s -\frac{\alpha}{2} + 1\Big)\left\|s(\bm{x} - \bm{x}^{*})+ t
\dot{\bm{x}}\right\|^2}
+ \frac{1}{t}\left\{\Big(\frac{2\theta}{\rho}- \frac{s\gamma}{2}\Big)\cdot\frac{\kappa^2}{2} -s^2\Big(s-\frac{\alpha}{2} + 1\Big)\right\}\left\|  \bm{x} - \bm{x}^{*}\right\|^2\Bigr)\notag\\
&\quad + p\left\{{{\frac{t^2}{2}\left\{\mathcal{L}(\bm{x},\bm{\lambda}^*)- \mathcal{L}(\bm{x}^*,\bm{\lambda})\right\}}}
 + {{\frac{1}{2} \left\| s(\bm{x} - \bm{x}^{*})+ t\dot{\bm{x}}\right\|^2 }}
 {{+ \frac{\xi}{2} \left\| \bm{x} - \bm{x}^{*} \right\|^2}}
+{{\theta \| \bm{\lambda} - \bm{\lambda}^*\|^2}}\right\}\Big]\notag\\
&= t^{p-1}\Big[{{t^2\Big(1-\frac{s\gamma}{2} + \frac{p}{2}\Big)\left\{\mathcal{L}(\bm{x},\bm{\lambda}^*)- \mathcal{L}(\bm{x}^*,\bm{\lambda})\right\}}}
+ {{\left\{\Big(s - \frac{\alpha}{2} + 1\Big) + \frac{p}{2}\right\}\left\| s(\bm{x} - \bm{x}^{*})+ t\dot{\bm{x}}\right\|^2}}
  \notag\\ &\quad{{+\Big(\left\{\Big(\frac{2\theta}{\rho}- \frac{s\gamma}{2}\Big)\cdot\frac{\kappa^2}{2} -s^2\Big(s-\frac{\alpha}{2} + 1\Big)\right\}+ \frac{p\xi}{2}\Big)\left\|\bm{x} - \bm{x}^{*}\right\|^2}} 
  + {{p\theta\| \bm{\lambda} - \bm{\lambda}^* \|^2}}\Big]\notag\\
  &= t^{p}\Big[{{\Big(1-\frac{s\gamma}{2} + \frac{p}{2}\Big)a(t)}}
  + \left\{\Big(s - \frac{\alpha}{2} + 1\Big) + \frac{p}{2}\right\}b(t)\notag\\
  &\quad+
  {{\underbrace{\Big(\left\{\Big(\frac{2\theta}{\rho}- \frac{s\gamma}{2}\Big)\cdot\frac{\kappa^2}{2} -s^2\Big(s-\frac{\alpha}{2} + 1\Big)\right\}+ \frac{p\xi}{2}\Big)}_{K_1}c(t)}}
  + {{\underbrace{p\theta}_{K_2}}} d(t)\Big].
\end{align}
 where
\begin{align*}
a(t) &= t\left\{\mathcal{L}(\bm{x},\bm{\lambda}^*)- \mathcal{L}(\bm{x}^*,\bm{\lambda})\right\},\\
b(t) &= \frac{1}{t}\left\| s(\bm{x} - \bm{x}^*)+ t\dot{\bm{x}}\right\|^2,\\
c(t) &= \frac{1}{t}\left\|\bm{x} - \bm{x}^{*} \right\|^2,\\
d(t) &=  \frac{1}{t}\| \bm{\lambda} - \bm{\lambda}^* \|^2.
\end{align*}

We choose here $s = \frac{\alpha}{\gamma+2},\ p = -2 + \frac{\gamma\alpha}{\gamma + 2}$.
The terms associated with $a(t)$ and $b(t)$ in \eqref{eq:H''} vanish and it appears that
\begin{align}\label{eq:H'}
  \mathcal{H}'(t)\leq t^{p}
  (K_1c(t) + K_2d(t))
  \leq t^{p}\max\{K_1, K_2\}(c(t) + d(t)),
\end{align}
where the real constant $\xi,K_1,K_2$ is given by:
\begin{align*}
  \xi &= s(s - \frac{\alpha}{2} + 1)\notag
  =\frac{-\alpha\gamma\left(\alpha-\frac{2(\gamma + 2)}{\gamma}\right)}{2(\gamma + 2)^2}, \\
K_1&=\left\{\Big(\frac{2\theta}{\rho}- \frac{s\gamma}{2}\Big)\cdot\frac{\kappa^2}{2} -s^2\Big(s-\frac{\alpha}{2} + 1\Big)\right\}+ \frac{p\xi}{2}\notag\\
&= \frac{\theta\kappa^2}{\rho} - \frac{\alpha(\alpha^2(\gamma-2)\gamma - 4\alpha(\gamma-1)(\gamma+2)+ (\gamma + 2)^2(\gamma\kappa^2 + 4))}{4(\gamma + 2)^3}\\
&=\frac{1}{4\rho(\gamma + 2)^3}\Bigl[(4\theta - \alpha\rho)\kappa^2\gamma^3 + \left\{24\theta\kappa^2-\alpha\rho(\alpha^2-4\alpha + 4(\kappa^2+1))\right\}\gamma^2
\\&\qquad\qquad+ \left\{48\theta\kappa^2+
2\alpha\rho(\alpha^2 + 2\alpha - 2(\kappa^2+4))\right\}\gamma + 32\theta\kappa^2 - 8\alpha\rho(\alpha + 2)\Bigr],\\
K_2 &= p\theta
 = \frac{\theta\gamma\left(\alpha-\frac{2(\gamma + 2)}{\gamma}\right)}{\gamma + 2}.
\end{align*}
Applying $\theta = \alpha\rho / 4$. The third-order term of $\gamma$ in $K_1$vanishes and we obtain
\begin{align*}
    K_{1, \theta = \alpha\rho/4}(\alpha) &= \frac{\alpha\rho}{4\rho(\gamma + 2)^3}
    \Bigl[-\gamma(\gamma-2)\alpha^2 + 4(\gamma - 1)(\gamma + 2)\alpha + 2(\kappa^2 - 2)(\gamma+2)^2
    \Bigr],\\
    K_{2, \theta = \alpha\rho/4}(\alpha) &= \frac{\alpha\rho}{4\rho(\gamma + 2)^3}
    \Bigl[\rho\gamma(\gamma+2)^2\left(\alpha - \frac{2(\gamma+2)}{\gamma}\right)\Bigr].
\end{align*}
Let $\tilde{\alpha} = \frac{2(\gamma+2)}{\gamma}$. It is clear that $\xi \le 0$ for $\alpha\ge \tilde{\alpha}$, $\xi > 0$ for $\alpha\in (0,  \tilde{\alpha})$ and $K_{2, \theta = \alpha\rho/4}(\alpha)\leq 0$ for $\alpha\in \left(0,  \tilde{\alpha} \right]$, $K_{2, \theta = \alpha\rho/4}(\alpha) \ge 0$ for $\alpha \ge \tilde{\alpha}$.\\
When $\gamma = 2$ and $\kappa^2 < 2$, $K_{1, \theta = \alpha\rho/4}(\alpha) \le 0$ for $\alpha\in\left(0,-2(\kappa^2 - 2)\right]$ and $K_{1, \theta = \alpha\rho/4}(\alpha) > 0$ for all $\alpha>-2(\kappa^2 - 2)$.  When $\kappa^2 \ge 2$, $\alpha$ contrary to $\alpha > 0$.
When $\gamma \neq 2$, we define 
\begin{align*}
    \bar{K}_{1, \theta = \alpha\rho/4}(\alpha) &= -\gamma(\gamma-2)\alpha^2 + 4(\gamma - 1)(\gamma + 2)\alpha + 2(\kappa^2 - 2)(\gamma+2)^2\\
    &= -\gamma(\gamma-2)\left\{\alpha - \frac{2(\gamma-1)(\gamma + 2)}{\gamma(\gamma - 2)}\right\}^2 + \frac{D_1}{\gamma(\gamma-2)},\\
    \bar{K}_{2, \theta = \alpha\rho/4}(\alpha) &= \rho\gamma(\gamma+2)^2\left(\alpha - \frac{2(\gamma+2)}{\gamma}\right),
\end{align*}
where $D_1$ is the discriminant of $\bar{K}_{1, \theta = \alpha\rho/4}$ and
\begin{align*}
    D_1 &= 4(\gamma-1)^2(\gamma+2)^2 + 2\gamma(\gamma-2)(\kappa^2 - 2)(\gamma + 2)^2 \\
    &=2(\gamma + 2)^2\left\{\kappa^2\gamma(\gamma - 2) + 2\right\}.
\end{align*}
Solving  $\bar{K}_{1, \theta = \alpha\rho/4}(\alpha) = 0$, we have
\begin{align*}
 \alpha= \frac{-2(\gamma-1)(\gamma+2)\pm\sqrt{D_1}}{-\gamma(\gamma-2)},\quad(\gamma\neq 2).
 \end{align*}
 We define $\alpha_1, \alpha_2$ as
 \begin{align*}
    \alpha_1 = \tilde{\alpha}\cdot\frac{\gamma-1}{\gamma-2}+\frac{\sqrt{D_1}}{\gamma(\gamma-2)},\quad \alpha_2 = \tilde{\alpha}\cdot\frac{\gamma-1}{\gamma-2}-\frac{\sqrt{D_1}}{\gamma(\gamma-2)},\quad (\alpha_1\le\alpha_2).
\end{align*}
 When $\gamma>2$ for $\kappa^2 \ge -\frac{2}{\gamma(\gamma-2)}$ and $\gamma\in\left[1,2\right)$ for $\kappa^2 \le -\frac{2}{\gamma(\gamma-2)}$ (except for $\gamma = 1$ for $\kappa^2 = -\frac{2}{\gamma(\gamma-2)}$), $D_1\ge0$ and $\alpha$ satisfies $\alpha>0$. 
 When  $\gamma = 1$ for $\kappa^2 = -\frac{2}{\gamma(\gamma-2)}$, $D_1=0, \alpha=0$ and $\alpha$ contrary to $\alpha > 0$. 
 When $\gamma>2$ for $\kappa^2 < -\frac{2}{\gamma(\gamma-2)}$ and $\gamma\in\left[1,2\right)$ for $\kappa^2 > -\frac{2}{\gamma(\gamma-2)}$, $D_1$ is always negative and $\alpha$ can take any $\alpha > 0$.
As for the case where $D_1\ge0$, the relationship between $\alpha_1, \alpha_2$ and $\tilde{\alpha}$, it is straightforward to see that $(0<)\ \tilde{\alpha} <\alpha_1\le\alpha_2$ for $\gamma>2$. 
For $\gamma\in\left[1,2\right)$  (except for $\gamma = 1$ for $\kappa^2 = -\frac{2}{\gamma(\gamma-2)}$), let $h(\kappa^2) = \gamma - \left(\frac{\sqrt{D_1}}{2} - 2\right)$. 
Note that $h(0) = 0$ and $h(\kappa^2)$ is monotonically increasing with respect to $\kappa^2\ (0\le\kappa^2\le2)$ and we obtain
 \begin{align*}
     \tilde{\alpha} - \alpha_2
     =\frac{1}{\gamma-2}\Bigl(-\tilde{\alpha} + \frac{\sqrt{D_1}}{\gamma}\Bigr)
     =-\frac{2}{\gamma(\gamma-2)}h(\kappa^2)
     \ge0.
 \end{align*}
Thus, we can see $(\alpha_1<)\ 0< \alpha_2\le\tilde{\alpha}$.
Hence, when $\gamma>2$ and $\kappa^2 \ge -\frac{2}{\gamma(\gamma-2)}$, $K_{1, \theta = \alpha\rho/4}(\alpha)\le0$ for $0 < \alpha \le \alpha_1, \alpha_2 \le \alpha$ and $K_{1, \theta = \alpha\rho/4}(\alpha)\ge0$ for $\alpha_1 \le\alpha \le\alpha_2$. when $\kappa^2 < -\frac{2}{\gamma(\gamma-2)}$, always negative for all $\alpha$.
When $\gamma\in\left[1,2\right)$\ and\ $\kappa^2 \le -\frac{2}{\gamma(\gamma-2)}$ (except for $\gamma = 1$, $\kappa^2 = -\frac{2}{\gamma(\gamma-2)}$), $K_{1, \theta = \alpha\rho/4}(\alpha)\le0$ for $0 < \alpha \le \alpha_2 $ and $K_{1, \theta = \alpha\rho/4}(\alpha)\ge0$ for $\alpha_2 \le\alpha$.
When $\kappa^2 > -\frac{2}{\gamma(\gamma-2)}$, always positive for all $\alpha$. 
Given the above, 
\begin{enumerate}[label=(\roman*), leftmargin=3.5em, labelsep=0.5em]
    \item[\textit{Case 1}] When $\gamma = 2$ ($\kappa^2 < 2$) and $\alpha\in\left(0,-2(\kappa^2 - 2)\right]$ or   $\gamma > 2$ and $\alpha\in \left(0, \tilde{\alpha} \right]$ or $\gamma\in\left[1,2\right)$ $\bigl(\kappa^2 \le -\frac{2}{\gamma(\gamma-2)}$ (except for $\gamma = 1$, $\kappa^2 = -\frac{2}{\gamma(\gamma-2)})\bigr)$ and $\alpha\in \left(0,  \alpha_2\right]$, it holds that $\max\{K_1, K_2\}\leq 0$,
    \item[\textit{Case 2}] Otherwise, $\max\{K_1, K_2\}> 0$. 
\end{enumerate}

Table \ref{tab:sign-relation} summarizes the sign relations of $K_1$ and $K_2$ for each range of $\gamma$.
\begin{table}[ht]
\centering
\scriptsize
\setlength{\tabcolsep}{4pt} 
\caption{Sign relations of $K_1$ and $K_2$ for various ranges of $\gamma, \kappa^2, \alpha$}
\label{tab:sign-relation}
\renewcommand{\arraystretch}{1.5} 
\begin{tabular}{lp{22mm}llll} 
\toprule
$\gamma$ & $\kappa^2\ (D_1)$ & 1. $\max\{K_1, K_2\}\leq 0$ & 2.1. $K_1\leq 0, K_2\geq 0$ & 2.2. $K_1> 0, K_2< 0$ & 2.3. $\min\{K_1, K_2\}>0$ \\ 
\midrule

\multirow{2}{*}{$\gamma>2$} & $\kappa^2 \ge -\frac{2}{\gamma(\gamma-2)}$ $(D_1\ge0)$ & $\alpha\in (0, \tilde{\alpha}]$ & $\alpha\in\left[\tilde{\alpha}, \alpha_1\right], \alpha_2\le\alpha$ & & $\alpha\in(\alpha_1, \alpha_2)$ \\
 & $\kappa^2 < -\frac{2}{\gamma(\gamma-2)}$ $(D_1<0)$ & $\alpha\in (0, \tilde{\alpha}]$ & $\tilde{\alpha} \le \alpha$ & & \\ 
\addlinespace[1ex] 

\midrule
$\gamma=2$ & $\kappa^2 <2$ & $\alpha\in(0,-2(\kappa^2 - 2)]$ & $\alpha = 4$ & $\alpha\in(-2(\kappa^2 - 2), 4)$ & $\alpha>4$ \\
\addlinespace[1ex]

\midrule
\multirow{4}{*}{$\gamma\in[1,2)$} & \makecell[l]{$\kappa^2 \le -\frac{2}{\gamma(\gamma-2)}$ \\ $(D_1\ge0)$ \\ \tiny except for $\gamma = 1$, \\ \tiny$\kappa^2 = -\frac{2}{\gamma(\gamma-2)}$} & $\alpha\in (0, \alpha_2]$ & & $\alpha\in (\alpha_2, \tilde{\alpha})$ & $\alpha>\tilde{\alpha}$ \\ 
\nopagebreak[4] \\ [-2ex] 
 & \makecell[l]{$\kappa^2 > -\frac{2}{\gamma(\gamma-2)}$ \\ $(D_1<0)$} & & & $\alpha\in(0,\tilde{\alpha})$ & $\alpha>\tilde{\alpha}$ \\ 
\bottomrule
\end{tabular}
\end{table}

\newpage
\begin{enumerate}
  \item When $\max\{K_1, K_2\}\leq 0$, $\mathcal{H}$ is monotonically decreasing and bounded, namely,
\begin{align}\label{eq:b}
  \forall t \geq t_0,\ \mathcal{H}(t) \leq \mathcal{H}(t_0).
\end{align}
From the range of $\alpha$, it is clear that $\xi \geq 0$. Using \eqref{eq:Lyapunov}, we have
\begin{align}\label{eq:bb}
  \mathcal{E}(t)\geq \frac{t^2}{2}\left\{\mathcal{L}(\bm{x},\bm{\lambda}^*)- \mathcal{L}(\bm{x}^*,\bm{\lambda})\right\}\geq 0.
\end{align}
Using $\mathcal{H}(t)=t^p\mathcal{E}(t)$, \eqref{eq:bb} can be written as follows:
\begin{align}\label{eq:bbb}
  \mathcal{H}(t)\geq \frac{t^{p+2}}{2}\left\{\mathcal{L}(\bm{x},\bm{\lambda}^*)- \mathcal{L}(\bm{x}^*,\bm{\lambda})\right\}.
\end{align}
From \eqref{eq:b},\eqref{eq:bbb}, we obtain 
\begin{align}\label{eq:h0}
  \frac{t^{p+2}}{2}\left\{\mathcal{L}(\bm{x},\bm{\lambda}^*)- \mathcal{L}(\bm{x}^*,\bm{\lambda})\right\}\leq \mathcal{H}(t)\leq\mathcal{H}(t_0).
\end{align} 
From the definition of the Lagrange function, the following holds.
\begin{align*}
       \frac{t^{p+2}}{2}\Big( F(\bm{x}(t)) - F^*\Big)+\frac{t^{p+2}}{2}\langle \bm{\lambda}^*, A \bm{x} - \bm{b}\rangle  & \leq\mathcal{H}(t_0),
\end{align*}
which implies 
       \begin{align*}
  F(\bm{x}(t)) - F^* = \mathcal{O}\Big(\frac{1}{t^{p+2}}\Big) = \mathcal{O}\Big(\frac{1}{t^{\frac{\gamma\alpha}{\gamma + 2}}}\Big).
\end{align*}

\item When $\max\{K_1, K_2\}>0$, the monotonicity of $\mathcal{H}$ does not hold. This case can be categorized into the following three cases:
\begin{itemize}
  \item \textit{Case 2.1 : When $K_1\leq 0$, $K_2\ge 0$, $\gamma = 2\ (\kappa^2 < 2)\ and\ \alpha=4$ or $\gamma >2$ $\bigl(\kappa^2 \ge -\frac{2}{\gamma(\gamma-2)}\bigr)$ and $\alpha\in\left[\tilde{\alpha}, \alpha_1\right], \alpha_2\le\alpha$ or $\kappa^2 < -\frac{2}{\gamma(\gamma-2)}$\ and $\tilde{\alpha}\le\alpha$. From the range of $\alpha$, we have $\xi \le 0$.}
  \item \textit{Case 2.2 : When $K_1>0, K_2< 0$, $\gamma = 2\ (\kappa^2 < 2)\ and\ \alpha\in\left(-2(\kappa^2 - 2), 4\right)$ or $\gamma\in\left[1,2\right)$\ $\bigl(\kappa^2 \le -\frac{2}{\gamma(\gamma-2)}$ (except for $\gamma = 1$, $\kappa^2 = -\frac{2}{\gamma(\gamma-2)})\bigr)$ and $\alpha\in \left(\alpha_2, \tilde{\alpha}\right)$ or $\kappa^2 > -\frac{2}{\gamma(\gamma-2)}$ and $\alpha\in \left(0, \tilde{\alpha}\right)$. In this case, we have $\xi > 0$.}
  \item \textit{Case 2.3 : When  $\min\{K_1, K_2\}>0$, $\gamma = 2\ (\kappa^2 < 2)\ and\ \alpha>4$, $\gamma > 2$ $\bigl(\kappa^2 \ge -\frac{2}{\gamma(\gamma-2)}\bigr)$ and $\alpha\in\left(\alpha_1, \alpha_2\right)$ or $\gamma\in\left[1,2\right)$ and $\alpha>\tilde{\alpha}$. In this case, we have $\xi < 0$.}
\end{itemize}
For analytical convenience, we choose $\xi\le0$; that is, we restrict our focus to \textit{Case 2.1} and \textit{Case 2.3}.
By using Assumption~\ref{ass1}, we have
\begin{align*}
  Kt\|\bm{x} - \bm{x}^*\|^2
  \leq t\left\{\mathcal{L}(\bm{x},\bm{\lambda}^*)- \mathcal{L}(\bm{x}^*,\bm{\lambda})\right\},
  \end{align*}
  which yields
  \begin{align*}
Kt^2\cdot\frac{1}{t}\|\bm{x} - \bm{x}^*\|^2
  &\leq t\left\{\mathcal{L}(\bm{x},\bm{\lambda}^*)- \mathcal{L}(\bm{x}^*,\bm{\lambda})\right\}.
\end{align*}
Thus, we obtain
\begin{align}\label{eq:c+d}
  c(t) &\leq \frac{1}{Kt^2}a(t)
.\end{align}
It follows from \eqref{eq:Lyapunov} that
\begin{align*}
  \mathcal{H}(t) = t^p\mathcal{E}(t)&\geq\frac{t^{p+1}}{2}\left[a(t)
    + \xi c(t)\right]\notag\geq\frac{t^{p+1}}{2}\left[\Bigl(1
    + \frac{\xi}{Kt^2}\Bigr)a(t)\right].
  \end{align*}
For sufficiently large $t$, it holds that 
\begin{align*}
1 + \frac{\xi}{Kt^2} > \frac{1}{2},
\end{align*}
which implies that there exists $t_1$ such that for all $t \geq t_1, \mathcal{H}(t)\geq0$ and
\begin{align}\label{eq:4}
  \mathcal{H}(t)&\geq \frac{t^{p+1}}{4}a(t).
\end{align}
From \eqref{eq:H'},\eqref{eq:c+d},\eqref{eq:4}, we get
\begin{align*}
  \mathcal{H}'(t)&\leq t^{p}
  (K_1c(t) + K_2d(t))\notag\\
  &\leq t^{p}\max\{K_1, K_2\}(c(t) + d(t))\notag\\
  &\leq t^{p-2}\frac{\max\{K_1, K_2\}}{K}a(t)\notag\\
 &\leq \left[\frac{\max\{K_1, K_2\}}{K}\right]\frac{4\mathcal{H}(t)}{t^{3}}.
\end{align*}
From the differential form of \textit{Proximal Gr\"onwall Inequality} (\cite{Clarke1998} Theorem 11.29.),  $\text{for all } t \ge t_1$, there exists a constant such that
\begin{align}\label{eq:gronwall}
  \mathcal{H}(t) \le \mathcal{H}(t_1)\exp\left(\int_{t_1}^{t}\left[\frac{2\max\{K_1, K_2\}}{Ks^3}\right]ds\right).
\end{align}
From \eqref{eq:4} and the definition of the Lagrange function, we conclude that $t^{p+2}\Big( F(\bm{x}(t)) - F^*\Big)+t^{p+2}\langle \bm{\lambda}^*, A \bm{x} - \bm{b}\rangle$ is bounded. Thus, we have
\begin{align*}
F(\bm{x}(t)) - F^* = \mathcal{O}\Big(\frac{1}{t^{p+2}}\Big) = \mathcal{O}\Big(\frac{1}{t^{\frac{\gamma\alpha}{\gamma + 2}}}\Big).
\end{align*}
\end{enumerate}
\end{proof}
While it may appear that the apparent convergence rate can be increased indefinitely by increasing $\alpha$,
 the intrinsic convergence rate actually decreases as $\alpha$ increases. 
 This is because larger values of $\alpha$ lead to increasingly positive values of the coefficients $K_1$ and $K_2$ in $\mathcal{H}(t)'$. 
 In the context of Lyapunov stability analysis, this implies a reduction in the decay rate of the Lyapunov function, thereby slowing down the convergence. 
Consequently, the optimal $\alpha$ that yields the fastest intrinsic convergence rate is the one that maximizes the magnitude of the negative coefficients and is given by $\text{argmax} |K_1 + K_2|$ in \textit{Case 1}, which is $\alpha\to+0$. 
When $\gamma \in \left[1, 2\right)$ $\bigl(\kappa^2> -\frac{2}{\gamma(\gamma-2)}\bigr)$, it is given by $\text{argmin} |K_1 + K_2|$ in \textit{Case 2.3}, which is $\tilde{\alpha}$.
While the convergence rate is the primary concern, the number of iterations is another crucial factor to consider.
As can be inferred from \eqref{eq:h0}, \eqref{eq:gronwall}, the convergence coefficient for each of \textit{Case 1} and \textit{Case 2.3} is given by :
\begin{align*}
  \mathcal{H}_0 &= t_0^p\mathcal{E}(t_0)\\
  &= \frac{t_0^p}{2} \left[ t_0^{2}\left( \mathcal{L}(\bm{x}(t_0), \bm{\lambda}^*) - \mathcal{L}(\bm{x}^*, \bm{\lambda}(t_0)) \right) \right. \notag \\
  &\quad + \left\| s(\bm{x}(t_0) - \bm{x}^*) + t_0\dot{\bm{x}}(t_0)\right\|^2 
  + \xi \left\| \bm{x}(t_0) - \bm{x}^* \right\|^2  \left. + 2\theta \| \bm{\lambda}(t_0) - \bm{\lambda}^* \|^2 \right].
\end{align*}
Here, the coefficients $s = \frac{\alpha}{\gamma+2},\ \xi = \frac{-\alpha\gamma\left(\alpha-\tilde{\alpha}\right)}{2(\gamma + 2)^2}$, and $\theta = \frac{\alpha\rho}{4}$
directly impacts the convergence rate.  
Since the condition $\xi \ge 0$ must be satisfied in \textit{Case 1}, the optimal $\alpha$ that minimizes $\xi$ is found to be $\tilde{\alpha}$ for $\gamma > 2$, $-2(\kappa^2 - 2)$ for $\gamma = 2\left(\kappa^2 < 2\right)$ and $\alpha_2$ for 
$\gamma \in \left[1, 2\right)$ $\bigl( \kappa^2 \le -\frac{2}{\gamma(\gamma-2)} \text{ (except for } \gamma = 1,$ $\kappa^2 = -\frac{2}{\gamma(\gamma-2)}) \bigr).$
For $\gamma \in \left[1, 2\right)\bigl(\kappa^2> -\frac{2}{\gamma(\gamma-2)}\bigr)$, since $\xi < 0$ must be satisfied in \textit{Case 2.3}, taking $\alpha\to\infty$ minimizes $\xi$.
It can also be observed that there is a trade-off between $\alpha$ and $\rho$.
\begin{remark}
Regarding $\alpha$, while it is desirable to set $\alpha \to +0$ from the perspective of maximizing the convergence rate for $\gamma > 2$, $\gamma = 2\left(\kappa^2 < 2\right)$ and $\gamma \in \left[1, 2\right)$ $\bigl( \kappa^2 \le -\frac{2}{\gamma(\gamma-2)} \text{ (except for } \gamma = 1,$ \\
$\kappa^2 = -\frac{2}{\gamma(\gamma-2)}) \bigr)$, $\tilde{\alpha}$ for $\gamma \in \left[1, 2\right)$ $\bigl(\kappa^2 > -\frac{2}{\gamma(\gamma-2)}\bigr)$, 
accounting for the trade-off with the number of convergence iterations suggests a different optimal value. 
As for $\rho$, a similar trade-off with $\alpha$ exists when considering its impact on the iterations. 
These results provide a guideline for selecting $\alpha$ and $\rho$, aiming for a high convergence speed while ensuring that the total number of iterations remains well-managed.
\end{remark}
Regarding the relationship between the parameters $\gamma$ and $r$ defined in \textbf{H1}$(\gamma)$ and \textbf{H2}$(r)$,
 it is noted in \cite{aujol2019} that a convex differentiable function satisfying both conditions necessarily implies $r \ge \gamma$.
Additionaly, a convex differentiable function with a $L$-Lipschitz continuous gradient ($L > 0$), also satisfies the growth condition \textbf{H2}$(2)$ 
for some constant $K > 0$ automatically satisfies \textbf{H1}$(\gamma)$ with $\gamma = 1 + \frac{K}{2L} \in (1, 2].$
It should be noted that the analysis for the case $\gamma > 2$ becomes inconsistent under some specific conditions.
For further technical details on this boundary case, see Lemmas 2.5, 2.6 in \cite{aujol2019}.

Although the case where $\gamma = 2$ ($\kappa^2 \ge 2$) is not explicitly treated in this analysis, it can be practically addressed by scaling the system by $1/\kappa$ as a preprocessing step.

%% file: chapter7_0515.tex
\section{Discretization of ODI corresponding to Accelerated Primal-Dual Algorithms}
Next, we consider the discretization of \eqref{eq:dynamics}. To begin with, we rewrite the second-order ODI for the primal variables $\bm{x}$ in \eqref{eq:dynamics} as first-order ODI by introducing an auxiliary variable $\bm{v}$ such that $\dot{\bm{x}} = \bm{v} - \bm{x}$.
\begin{equation}\label{eq:v}
\begin{cases}
 \displaystyle \dot{\bm{x}} = \bm{v} - \bm{x},\\
 \displaystyle 0 \in 2\dot{\bm{v}} + \frac{\alpha}{t} (\bm{v} - \bm{x}) + \partial_{\bm{x}} \mathcal{L}(\bm{x}, \bar{\bm{\lambda}}),\\
 \displaystyle 0 = \frac{\rho}{t}\dot{\bm{\lambda}} - \nabla_{\bm{\lambda}}\mathcal{L}(\bm{v}, \bm{\lambda}).
\end{cases}
\end{equation}

Indeed, there are various discretization schemes available, each of which can be discussed from the perspective of numerical analysis.
However, we restrict our attention to the implicit Euler method in this paper.

The implicit Euler discretization of \eqref{eq:v} with step size $h > 0$ and $x_k \approx x(kh)$ is given by
\begin{equation}\label{eq:39}
 \begin{cases}
 \displaystyle \frac{\bm{x}_{k+1} - \bm{x}_k}{h} = \bm{v}_{k+1} - \bm{x}_{k+1}, \\[7pt]
 \displaystyle 0 \in \frac{2(\bm{v}_{k+1}-\bm{v}_k)}{h} + \frac{\alpha}{kh} (\bm{v}_{k+1} - \bm{x}_{k+1}) + \partial_{\bm{x}}\mathcal{L}(\bm{x}_{k+1}, \bar{\bm{\lambda}}_{k+1}),\\[7pt]
 \displaystyle 0 = \frac{\rho}{kh} \cdot\frac{\bm{\lambda}_{k+1} - {\bm{\lambda}}_k}{h} - \nabla_{\bm{\lambda}}\mathcal{L}(\bm{v}_{k+1}, {\bm{\lambda}}_{k+1}).
 \end{cases}
\end{equation}
Eliminating $\bm{v}_{k+1}$ from the second equation in \eqref{eq:39} by using the first equation, we get
\begin{equation}\label{eq:vv}
  \begin{cases}
  \displaystyle \frac{\bm{x}_{k+1} - \bm{x}_k}{h} = \bm{v}_{k+1} - \bm{x}_{k+1}, \\
  \displaystyle 0 \in \partial_{\bm{x}}\mathcal{L}(\bm{x}_{k+1}, \bar{{\bm{\lambda}}}_{k+1}) + \frac{2k(1+h) + \alpha}{kh^2}(\bm{x}_{k+1} - \tilde{\bm{x}}_k),\ \tilde{\bm{x}}_k := \bm{x}_k + \frac{2kh}{2k(1+h) + \alpha} (\bm{v}_k - \bm{x}_{k}),\\
  \displaystyle 0 = \frac{\rho}{kh} \cdot \frac{{\bm{\lambda}}_{k+1} - {\bm{\lambda}}_k}{h} - \nabla_{\bm{\lambda}}\mathcal{L}(\bm{v}_{k+1}, {\bm{\lambda}}_{k+1}).
  \end{cases}
\end{equation}
Rearranging \eqref{eq:vv}, we obtain the following primal-dual formulation:
\begin{equation}\label{eq:vvv}
\begin{cases}
 \displaystyle \bm{x}_{k+1} = \arg\min_{\bm{x}}\Biggl\{\mathcal{L}(\bm{x}, \bar{{\bm{\lambda}}}_{k+1}) + \frac{2k(1+h) + \alpha}{kh^2}\left\|\bm{x} - \tilde{\bm{x}}_k\right\|^2\Biggr\},\ \tilde{\bm{x}}_k := \bm{x}_k + \frac{2kh}{2k(1+h) + \alpha} (\bm{v}_k - \bm{x}_{k}),\\
 \displaystyle \bm{v}_{k+1} = \bm{x}_{k+1} + (\bm{x}_{k+1} - \bm{x}_k)/h, \\
 \displaystyle {\bm{\lambda}}_{k+1} = {\bm{\lambda}}_k + \frac{h^2 k}{\rho}(\bm{A}\bm{v}_{k+1} - \bm{b}).
\end{cases}
\end{equation}
From here, for simplicity, we consider the case $m = 2$ and denote $\bm{x} = \begin{pmatrix}x^1\\x^2\end{pmatrix} = \begin{pmatrix}x\\y\end{pmatrix}$,\  
$\bm{v} = \begin{pmatrix}v^1\\v^2\end{pmatrix} = \begin{pmatrix}v\\w\end{pmatrix}$,\ 
$\begin{pmatrix}f^1\\f^2\end{pmatrix} = \begin{pmatrix}f\\g\end{pmatrix}$, and\   
$\begin{pmatrix}A^1\\A^2\end{pmatrix} = \begin{pmatrix}A\\B\end{pmatrix}$.
Note that the scheme in \eqref{eq:vvv} is formal at this stage, since the auxiliary variable $\bar{\lambda}_{k+1}$ has not yet been specified.
Motivated by the idea of \textit{semi-implicit splitting} proposed in \cite[Section~3.2]{Luo2023}, we adopt the same semi-implicit definition of the auxiliary dual variable $\bar{\lambda}_{k+1}$ as follows:
\begin{equation}\label{eq:lambda_bar_update}
\bar{\lambda}_{k+1}
:= \lambda_k + \frac{h^2k}{\rho}\bigl(A v_{k+1} + B w_k - b\bigr).
\end{equation}
This choice preserves the contraction property of the discrete Lyapunov function while decoupling the primal updates and yields the identity
\begin{equation}\label{eq:lambda_difference}
\lambda_{k+1}-\bar{\lambda}_{k+1}
= \frac{h^2 k}{\rho} B\bigl(w_{k+1}-w_k\bigr),
\end{equation}
which ensures that the dual difference is directly controlled by the primal increment $\|w_{k+1}-w_k\|$, and therefore does not destroy the contraction property of the discrete Lyapunov function.

Following \cite[Section~3.2]{Luo2023}, we define $\hat{\lambda}_{k}$ as follows:
\begin{equation}\label{eq:lambda_hat}
\hat{\lambda}_{k} = \lambda_k - \rho^{-1}(Ax_k + By_k - b) + \frac{h^2 k}{\rho} B(w_k - y_k).
\end{equation}

By reformulating \eqref{eq:vvv}, we obtain the following algorithm:
\begin{align}\label{eq:vvvv}
\begin{cases}
\widehat{\lambda}_{k} = \lambda_k - \rho^{-1}(Ax_k + By_k - b) + \frac{h^2 k}{\rho} B(w_k - y_k),\\
\displaystyle x_{k+1} = \arg\min_{x}\Biggl\{\mathcal{L}_{\sigma}(\boldsymbol{x}, y_k, \hat{\lambda}_{k}) + \frac{2k(1+h) + \alpha}{kh^2}\|\boldsymbol{x} - \tilde{x}_k\|^2\Biggr\},\\
\displaystyle v_{k+1} = x_{k+1} + (x_{k+1} - x_k)/h, \\
\displaystyle \bar{\lambda}_{k+1} = \lambda_k + \frac{h^2 k}{\rho}(Av_{k+1} + Bw_{k} - b),\\
\displaystyle y_{k+1} = prox_{\tau g}(\tilde{y}_k -\tau_k B^\top\bar{\lambda}_{k+1}),\\
\displaystyle w_{k+1} = y_{k+1}+(y_{k+1} - y_k)/h,\\
\displaystyle \lambda_{k+1} = \lambda_k + \frac{h^2 k}{\rho}(Av_{k+1} + Bw_{k+1} - b).\\
\end{cases}
\end{align}
where $\tilde{x}_k:= x_k + \frac{2kh}{2k(1+h) + \alpha} (v_k - x_{k}),\ 
\tilde{y}_k:= y_k + \frac{2kh}{2k(1+h) + \alpha} (w_k - y_{k}),$
$\sigma := \rho^{-1},\ \tau_k := \frac{kh^2}{2(2k(1+h) + \alpha)},$
and $\mathcal{L}_{\sigma}(\boldsymbol{x}, y_k, \hat{\lambda}_k) := f(\boldsymbol{x}) + g(y_k) + \hat{\lambda}_k^\top(A\boldsymbol{x} + By_k -b) + \frac{\sigma}{2}\|A\boldsymbol{x} + By_k - b\|^2$.

\subsection{Numerical Experiments}
In this section, we validate the theoretical results of the proposed method through numerical experiments on a constrained convex optimization problem arising from sparse recovery.
Specifically, we consider the following $\ell_1$-regularized least absolute deviation (LAD) problem, which is widely used in compressed sensing to recover original signals from limited observations~\cite{Lustig}. This approach is particularly effective for underdetermined systems, such as those encountered in MRI, astronomy, etc.
The LAD problem is given by the form:
\begin{equation}\label{eq:LAD_original}
\min_{x\in\mathbb{R}^n}
\; \delta \|x\|_1 + \|Ax - b\|_1,
\end{equation}
where $A \in \mathbb{R}^{m\times n}$ and $b \in \mathbb{R}^m$ denote the measurement matrix and the observation vector, respectively, and $\delta>0$ is a regularization parameter promoting sparsity of the solution. This setting corresponds to the case where $\gamma = 1$ in  \textbf{H1}$(\gamma)$. 
Since $r \geq \gamma$,  \textbf{H2}$(2)$ is also satisfied simultaneously.

To fit the framework of the proposed algorithm, we reformulate~\eqref{eq:LAD_original} as an equivalent constrained problem by introducing an auxiliary variable $y$:
\begin{equation}\label{eq:LAD_constrained}
\begin{aligned}
\min_{x\in\mathbb{R}^n,\; y\in\mathbb{R}^m}
\quad & \delta \|x\|_1 + \|y\|_1, \\
\text{s.t.}\quad & Ax - b = y.
\end{aligned}
\end{equation}
This reformulation allows us to apply the proposed primal--dual algorithm~\eqref{eq:vvvv} directly.

\paragraph{Experimental settings.}
The problem dimensions are set to $m=200$ and $n=100$, and the ground-truth signal is assumed to be $s$-sparse with $s=20$.
The entries of the matrix $A$ are generated with density $0.3$, and the observation vector $b$ is contaminated with additive noise of level $0.01$.
Unless otherwise stated, the regularization parameter is fixed to $\delta = 0.1$.

The proposed method initializes both the primal and dual variables with random values drawn from a standard normal distribution.
We set the step size $h$ to $1/\kappa$ and vary the acceleration parameters
$\alpha$ and $\rho$ in each experiment to explore optimal values.
From a theoretical perspective, the penalty parameter $\sigma$ appearing in the $x$-update is chosen as $\sigma=\rho^{-1}$, in accordance with the derivation of the discretization scheme.
In practice, however, to enhance numerical accuracy and convergence speed, we update $\sigma_k$ instead of using a fixed $\sigma$ to vary adaptively during the iterations.
Specifically, while the initial value is set to $\sigma_0=\rho^{-1}$, the subsequent updates of $\sigma_k$ follow the adaptive penalty strategy described in \cite[Section~3.4.1]{Boyd2011}, commonly referred to as the \textit{varying penalty parameter} with $(\tau, \mu) = (2, 10)$.\\

\begin{itemize}
  \item To verify our convergence analysis and the optimality of $\alpha$, we conducted experiments by varying $\alpha$ while keeping $\rho$ fixed to $\{2.0, 3.0, 4.0, 6.0\}$. 
  This scenario corresponds to the case where $\gamma \in \left[1, 2\right)$, The optimal $\alpha$ for the maximum rate is expected to be $\alpha \to +0$, when considering the iteration count of the algorithm, it results in $0<\alpha_2 \le \tilde{\alpha}\ (= 6.0)$.
The convergence rate falling within the range of $\frac{\gamma\alpha}{\gamma + 2}\in\left[2.0, 3.0\right)$.
In this experiment, the value of $\kappa^2$ was $\kappa^2\le -\frac{2}{\gamma(\gamma-2)}$. 
Regarding the parameter $\rho$, it was observed that a small value leads to numerical instability (or computational failure),
 while a large value result in a decrease of the convergence rate.
Notably, while the absolute fastest convergence was observed at $\alpha = 0.1$ ($\rho = 2.0$), 
other values of $\alpha$ failed to converge under these conditions. 
In contrast, $\alpha = 3.0$ ($\rho = 3.0$) achieved the fastest convergence while successfully circumventing numerical instability. 
This suggests that $\alpha = \alpha_2$ may be the optimal choice when accounting for the overall iteration count and algorithmic robustness [See Figure \ref{fig:rho_combined}].
  \item Next, we compare the proposed method using the parameter setting $(\alpha, \rho) = (0.1, 2.0)$ with the Linearized-Proximal-ADMM (AD-LPMM), following the notation as detailed in \cite{amir}.
While both our method and the AD-LPMM converged, the AD-LPMM exhibited a faster convergence rate. 
This is due to the fact that a stability analysis regarding the algorithm parameters $\rho$ and step size $h$ remains unexplored, 
preventing optimal tuning. Conducting a discrete-time analysis is a subject for our future work [See Figure \ref{fig:right-group}].
\end{itemize}

\begin{figure}[htbp]
  \centering

  \begin{subfigure}{0.48\textwidth}
    \centering
    \includegraphics[width=\textwidth]{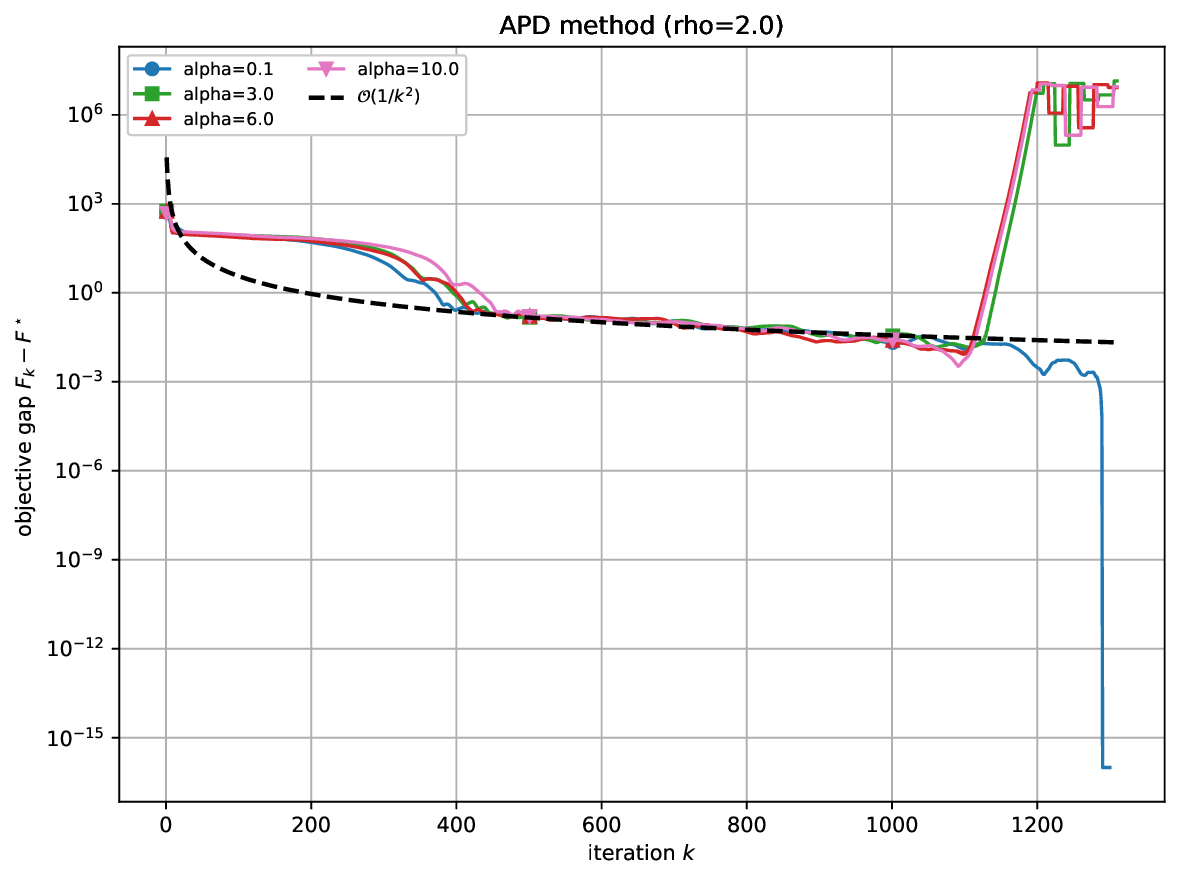}
    \caption{$\rho=2.0$}
  \end{subfigure}
  \hfill
  \begin{subfigure}{0.48\textwidth}
    \centering
    \includegraphics[width=\textwidth]{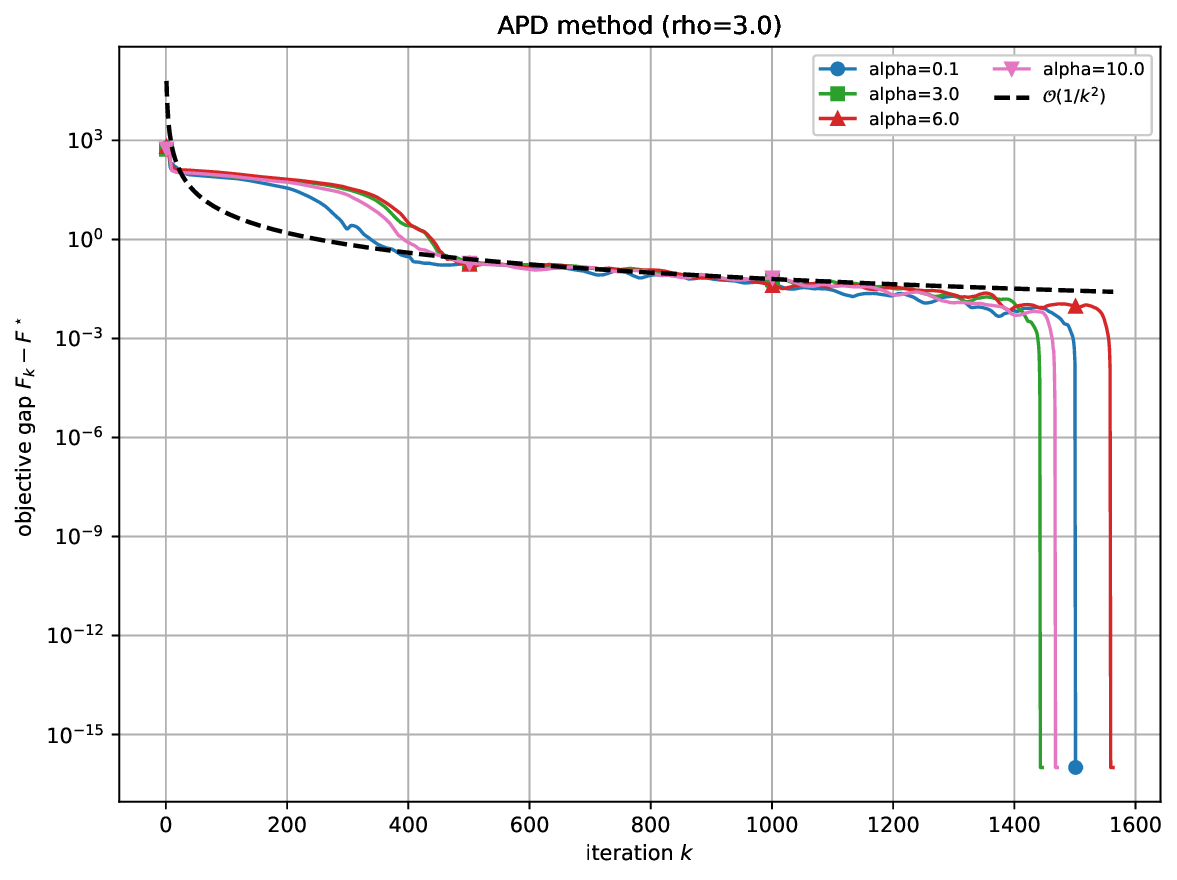}
    \caption{$\rho=3.0$}
  \end{subfigure}

  \vspace{3mm}

  \begin{subfigure}{0.48\textwidth}
    \centering
    \includegraphics[width=\textwidth]{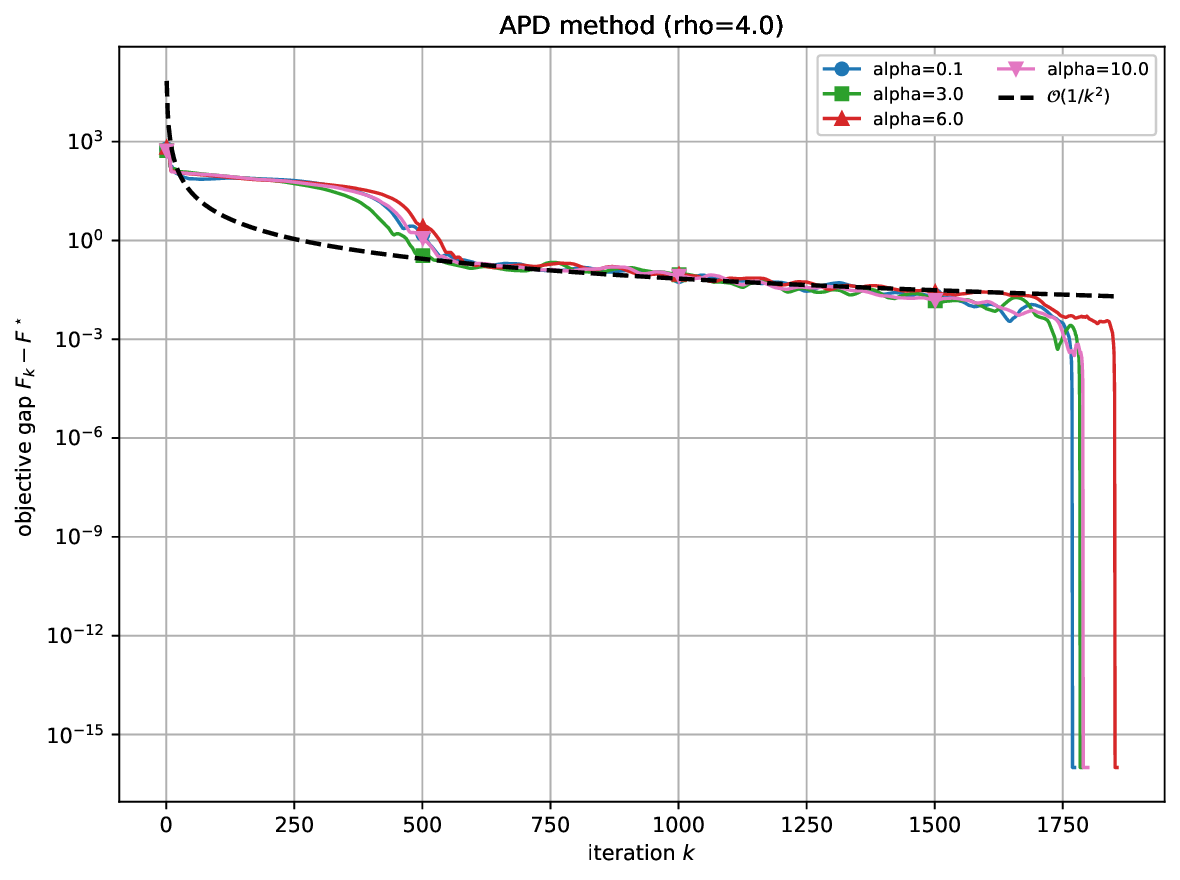}
    \caption{$\rho=4.0$}
  \end{subfigure}
  \hfill
  \begin{subfigure}{0.48\textwidth}
    \centering
    \includegraphics[width=\textwidth]{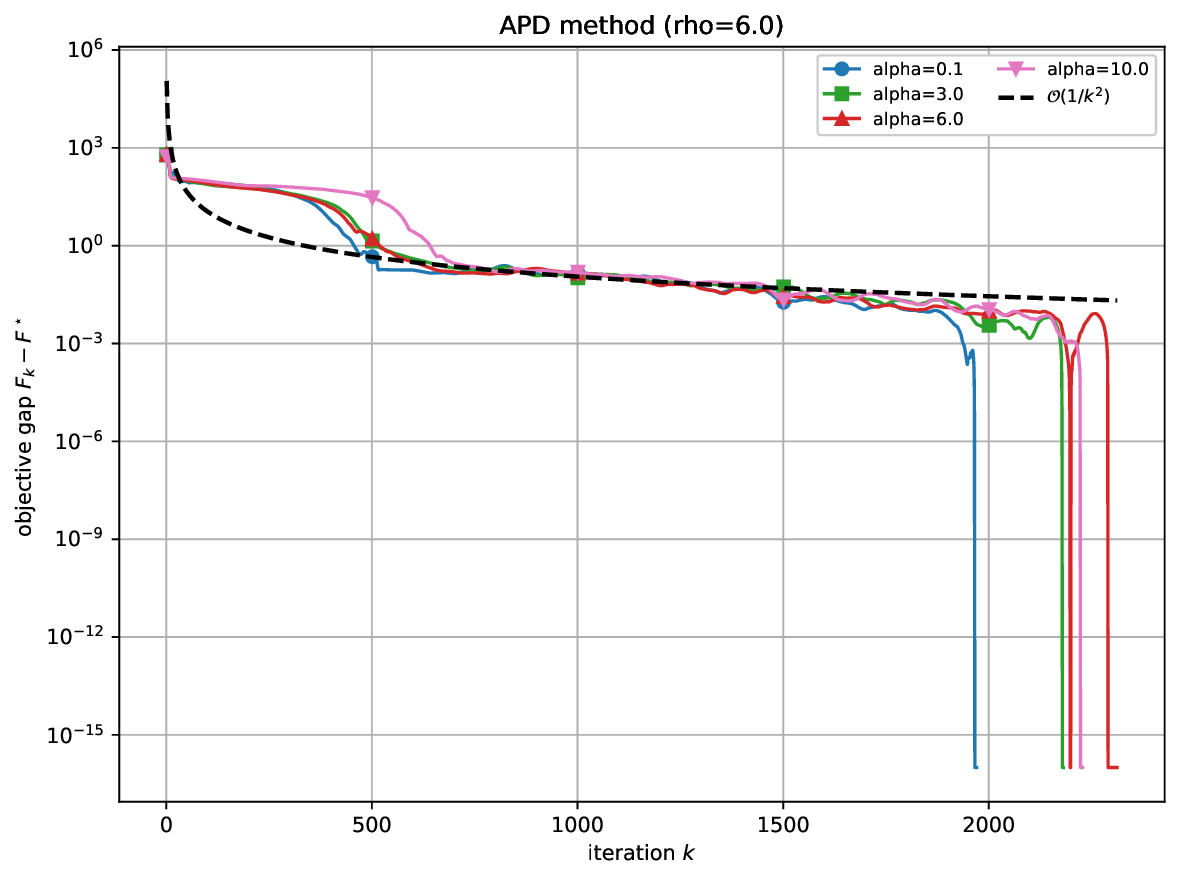}
    \caption{$\rho=6.0$}
  \end{subfigure}

  \caption{Convergence behavior for different $\alpha$ under various $\rho$.}
  \label{fig:rho_combined}
\end{figure}
\begin{figure}[htbp]
  \centering

  \begin{subfigure}{0.48\textwidth}
    \centering
    \includegraphics[width=\textwidth]{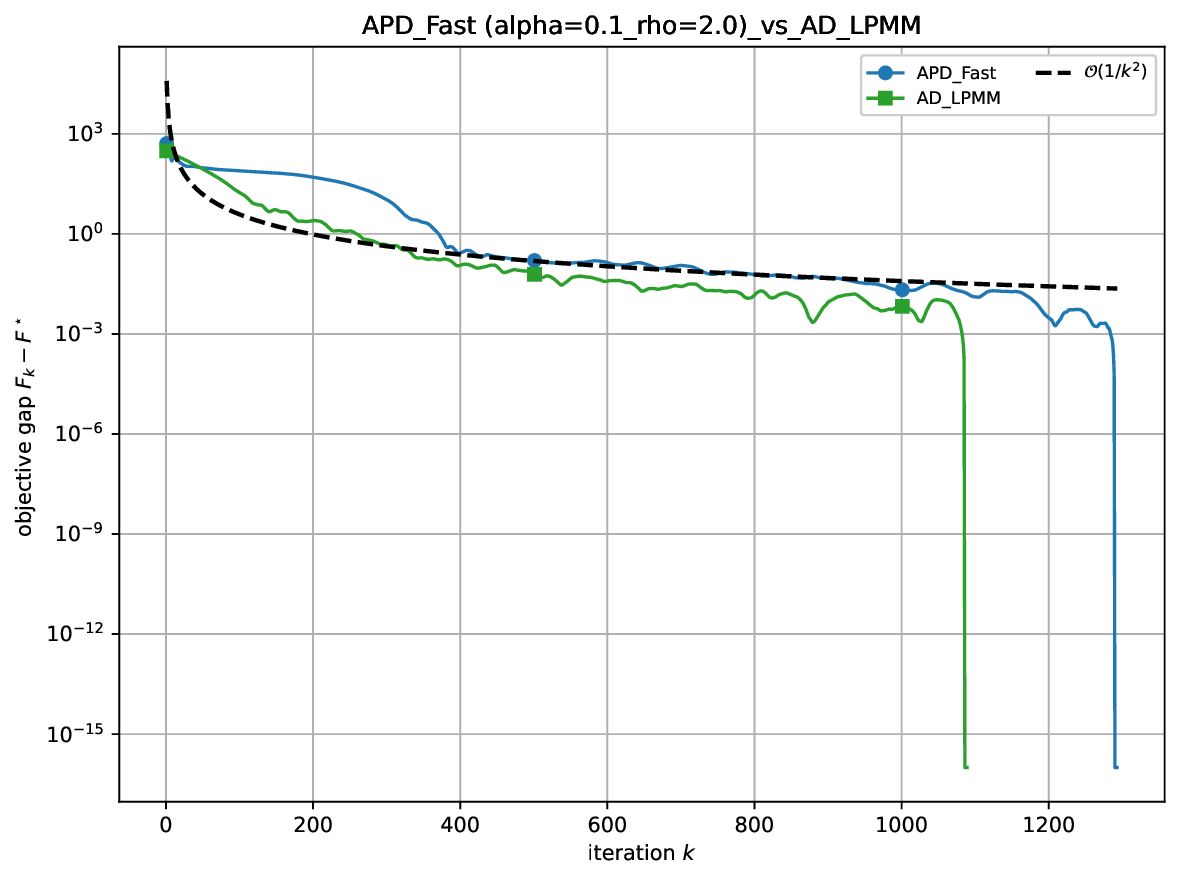}
    \caption{Proposed method with $\alpha=0.1$, $\rho=2.0$ and AD-LPMM.}
  \end{subfigure}
  \caption{Comparison between the Accelerated Primal-Dual method and the AD-LPMM.}
  \label{fig:right-group}
\end{figure}

%% file: chapter6_0521.tex
\newpage
\section{Conclusions and discussions}
In this work, we analyzed the convergence rates of primal-dual algorithms for proper, lower semi-continuous, convex functions, 
which are not necessarily differentiable, by imposing the Kurdyka-\text{\L}ojasiewicz (K\text{\L}) property in the continuous limit. 
We also evaluated how algorithmic parameters affect convergence, providing insight for their optimal selection.
While we discretized the scheme using numerical methods and confirmed its convergence through experiments, 
a rigorous discrete-level analysis remains an open problem. 
Specifically, the lack of a formal stability analysis for the algorithm parameters $\rho$ and step size $h$ hinders effective tuning of our method.
Furthermore, a fundamental challenge lies in the potential gaps between three levels:
 continuous dynamics, discrete schemes, and practical algorithms. 
 Specifically, convergence rates at the continuous level are not guaranteed to coincide with those of their discretized counterparts.
  Moreover, there is no inherent equivalence between a discrete scheme and the practical algorithm. 
  Without establishing these connections, a continuous-level analysis may lack practical significance.
Considering the categories of "continuous dynamics" and "discrete schemes/algorithms",  we are interested in what structures are preserved when viewing the discretization scheme as a functor (relation) between these categories. 
We believe that systematizing the dynamics around potentials or Lyapunov functions, directly linked to the properties of the ODE/ODI, is key.
Additionally, from the perspective of extending the problem class, we would like to use subgradient dynamical systems to expand to discontinuous/nonconvex dynamical systems and relax assumptions. 
The fact that the K\text{\L} property can be extended to a wide class of functions, regardless of convexity or continuity, is particularly useful. 
We hope to contribute to establishing theoretical guarantees for nondifferentiable and nonconvex functions encountered in complex practical applications.

\section*{Acknowledgements}
This work was supported by JSPS KAKENHI Grant Numbers JP26K14720 and JP23K10999.